\pgfplotsset{compat=1.14}
\newcommand{\ZB}{\mathbb{Z}}
\newcommand{\DB}{\mathbb{D}}
\newcommand{\RB}{\mathbb{R}} 
\newcommand{\CB}{\mathbb{C}}
\newcommand{\1}{\mathbbm{1}}
\newcommand{\FC}{\mathcal{F}}
\newcommand{\LC}{\mathcal{L}}
\newcommand{\MC}{\mathcal{M}}
\newcommand{\PC}{\mathcal{P}}
\newcommand{\RC}{\mathcal{R}}
\newcommand{\SC}{\mathcal{S}}
\newcommand*\conj[1]{\overline{#1}}
\newcommand*\supp{\mathrm{supp}}
\renewcommand{\Re}{\operatorname{Re}}
\renewcommand{\Im}{\operatorname{Im}}
\theoremstyle{plain}
\newtheorem{theorem}{Theorem}[section]
\newtheorem{lemma}[theorem]{Lemma}
\theoremstyle{definition}
\newtheorem{example}[theorem]{Example}
\theoremstyle{remark}
\newtheorem{remark}[theorem]{Remark}
\title{On Laplace--Carleson embeddings, and $L^p$-mapping properties of the Fourier transform}
\author{Eskil Rydhe\thanks{eskil.rydhe@math.lu.se, Centre for Mathematical Sciences, Lund University, Sweden. This work was supported by the Knut and Alice Wallenberg foundation, scholarship KAW 2016.0442, and produced while the author was a postdoc at University of Leeds, UK.}}
\begin{document}

\maketitle
	
\begin{abstract}
We investigate so-called Laplace--Carleson embeddings for large exponents. In particular, we extend some results by Jacob, Partington, and Pott. We also discuss some related results for Sobolev- and Besov spaces, and mapping properties of the Fourier transform. These variants of the Hausdorff--Young theorem appear difficult to find in the literature. We conclude the paper with an example related to an open problem.
\end{abstract}

\section{Introduction}\label{sec:Introduction}
Throughout this note we let $1\le p,q\le \infty$, and $p'=\frac{p}{p-1}$, so that $\frac{1}{p}+\frac{1}{p'}=1$. By $\RB_+$ and $\CB_+$ we respectively denote the set of positive real numbers $(0,\infty)$ and the complex upper half plane $\left\{z\in\CB \mid \Im z>0\right\}$. We let $\mu$ be a positive Borel measure on $\CB_+$. Preliminaries and notation not covered in this section is deferred to Section~\ref{sec:PreliminariesAndNotatiion}. In particular, we postpone the definitions of the following standard function spaces.

\medskip
\begin{tabular}{lcl}
	$H^p(\CB_+)$ & - & Hardy space of analytic functions on $\CB_+$;\\[2pt]
	$A^p_\alpha(\CB_+)$ & - & Standard weighted Bergman space;\\[2pt]
	$W^p_s$ & - & Sobolev space of tempered distributions on $\RB^d$;\\[2pt]
	$F^{p,q}_s$ & - & Triebel--Lizorkin space on $\RB^d$;\\[2pt]
	$B^{p,q}_s$ & - & Besov space on $\RB^d$;\\[2pt]
	$\dot X$ & - & Homogeneous counterpart of $X\in\{W^p_s,F^{p,q}_s,B^{p,q}_s\}$.
\end{tabular}
\medskip

The notion of \textit{Laplace--Carleson embeddings} was coined in \cite{Jacob-Partington-Pott2013:OnLaplace-CarlesonEmbeddingTheorems}, and refers to maps of the type
\begin{align*}
\LC\colon L^p(\RB_+)\to L^q(\CB_+,d \mu),\quad f\mapsto \LC f:=\int_0^\infty f(t)e^{2\pi i t \cdot}\,d  t.
\end{align*}
A priori, the above map is strictly formal. However, if $\LC L^p(\RB_+)$ is indeed contained in $L^q(\CB_+,d  \mu)$, then the inclusion is continuous by the closed graph theorem. 

One may of course also consider more general spaces in place of $L^p(\RB_+)$, for example weighted Sobolev spaces. In this direction, we will only summarily consider spaces of order $0$ and with simple weights.

We now recall some basic problems and results from \cite{Jacob-Partington-Pott2013:OnLaplace-CarlesonEmbeddingTheorems}: Given an interval $I\subset \RB$, with length $|I|$, we define the so-called Carleson box
\begin{align*}
Q_I:=\left\{x+iy\mid x\in I,0<y\le |I|\right\}\subset \CB_+.
\end{align*}
If $1\le p,q<\infty$, and $\LC\colon L^p(\RB_+)\to L^q(\CB_+,d  \mu)$ is bounded, then the measure $\mu$ necessarily satisfies
\begin{align}\label{eq:CarlesonCondition}
\mu(Q_I)\lesssim|I|^{q/p'}\quad\textnormal{for all intervals}\quad I\subset\RB.
\end{align}
The motivation for this paper arose from the question of to which extent the necessary condition \eqref{eq:CarlesonCondition} is also sufficient for $\LC\colon L^p(\RB_+)\to L^q(\CB_+,d  \mu)$ to be bounded. The following results can be found in \cite[Section~3]{Jacob-Partington-Pott2013:OnLaplace-CarlesonEmbeddingTheorems}.
\begin{enumerate}[(I)]
	\item If $1\le p\le 2$ and $p'\le q<\infty$, then \eqref{eq:CarlesonCondition} is also sufficient for $\LC\colon L^p(\RB_+)\to L^q(\CB_+,d  \mu)$ to be bounded.\label{page:CaseI}
	\item If $\mu$ is sectorial, i.e. there exists a $c>0$ such that $\mu$ has support in the sector $\left\{z\in\CB_+ \mid   \Im z\ge c |\Re z|\right\}$, and $2<p\le q<\infty$, then \eqref{eq:CarlesonCondition} is sufficient.
	\item If $\mu$ is sectorial, $1< p \le 2$, and $p\le q<\infty$, then \eqref{eq:CarlesonCondition} is sufficient.
	\item If $1\le q<p<\infty$, then \eqref{eq:CarlesonCondition} is not sufficient, even under the assumption that $\mu$ has support on the imaginary axis.
\end{enumerate}

It may be useful for orientation to consult the $\left(1/p,1/q\right)$-diagram in Figure~\ref{figure:KnownAndNewResults}. Our primary contribution to this body of knowledge is that the hypothesis of sectoriality may be removed in case (II).
\begin{theorem}\label{theorem:LaplaceCarlesonEmbeddingTheorem}
	If $2< p\le q<\infty$, and \eqref{eq:CarlesonCondition} holds, then $\LC\colon L^p(\RB_+)\to L^q(\CB_+,d \mu)$ is bounded.
\end{theorem}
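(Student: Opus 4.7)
The plan is to factor the embedding through a weighted Bergman space. I would first establish the Hausdorff--Young-type inequality $\LC \colon L^p(\RB_+) \to A^p_{p-3}(\CB_+)$ for $2 < p < \infty$, and then apply Luecking's Carleson measure theorem, which characterizes the bounded inclusions $A^p_\alpha(\CB_+) \hookrightarrow L^q(\CB_+, d\mu)$ for $p \le q < \infty$ and $\alpha > -1$ by the condition $\mu(Q_I) \lesssim |I|^{q(\alpha+2)/p}$. With $\alpha = p - 3$, which is $> -1$ precisely because $p > 2$, Luecking's exponent becomes $q(p-1)/p = q/p'$, matching \eqref{eq:CarlesonCondition}. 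The weight $\alpha = p - 3$ is singled out by scale-invariance: the $L^p$-preserving dilation $f(t) \mapsto \lambda^{1/p} f(\lambda t)$ induces on $\CB_+$ a rescaling under which the $A^p_\alpha$-norm of $\LC f$ is invariant exactly when $\alpha + 2 = p/p'$.

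To prove the Bergman embedding, I would decompose $f = \sum_{n \in \ZB} f_n$ with $f_n := f \cdot \1_{[2^n, 2^{n+1}]}$. Since $p > 2$ gives $p' < 2$, Hausdorff--Young applies at each fixed height $y > 0$ to the identity $\LC f_n(\cdot + iy) = \widehat{f_n e^{-2\pi y \cdot} \1_{\RB_+}}(-\cdot)$; a direct H\"older estimate using $\supp f_n \subset [2^n, 2^{n+1}]$ then gives
\begin{align*}
\|\LC f_n(\cdot + iy)\|_{L^p(\RB)} \lesssim \|f_n\|_{L^p}\cdot 2^{n(p-2)/p}\, e^{-2\pi y \cdot 2^n}.
\end{align*}
Splitting $\int_0^\infty y^{p-3}\|\LC f(\cdot+iy)\|_p^p\,dy$ over the dyadic bands $B_k = (2^{-k-1}, 2^{-k}]$ and bounding $\|\LC f(\cdot+iy)\|_p$ by $\sum_n \|\LC f_n(\cdot+iy)\|_p$, the geometric prefactors $2^{-k(p-2)}$ from $y^{p-3}\,dy$ exactly cancel the factor $2^{k(p-2)}$ generated by the slice bound, reducing the $k$-th band contribution to $((c \ast \kappa)_k)^p$, where $c_n := \|f_n\|_{L^p}$ and $\kappa_j := 2^{j(p-2)/p}\,e^{-\pi \cdot 2^j}$. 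The kernel $\kappa$ is summable on $\ZB$ (doubly-exponential decay as $j \to +\infty$, geometric decay as $j \to -\infty$ since $(p-2)/p > 0$), so Young's inequality $\|\kappa \ast c\|_{\ell^p} \le \|\kappa\|_{\ell^1}\|c\|_{\ell^p}$ yields $\|\LC f\|_{A^p_{p-3}}^p \lesssim \sum_n \|f_n\|_p^p = \|f\|_p^p$.

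I do not anticipate a serious analytic obstacle beyond identifying the intermediate space $A^p_{p-3}$; once that is in hand, the Bergman embedding follows from slice-wise Hausdorff--Young combined with a Young convolution estimate, and the final Carleson step is a standard application of Luecking's theorem. The threshold $p > 2$ enters synergistically in three places---it makes Hausdorff--Young available ($p' \le 2$), it makes the kernel $\kappa$ summable on the negative side ($(p-2)/p > 0$), and it ensures that the weight $y^{p-3}$ is integrable near $y = 0$---so the threshold is sharp within this approach.
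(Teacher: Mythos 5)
Your proposal is correct, and it takes a genuinely different route from the paper's. The paper proves the off-diagonal Bergman embedding $\LC\colon L^p(\RB_+)\to A^q_{q/p'-2}(\CB_+)$ for all $2<p\le q<\infty$ (Theorem~\ref{theorem:LaplaceBergmanEmbedding}), and does so via a rather long chain: a non-radial weighted Hardy--Littlewood inequality for the Fourier transform on $\RB^d$ (proved by Plancherel, induction, and Stein--Weiss interpolation), the consequent embedding $\FC\colon L^p\to\dot W^p_{d(2/p-1)}$, the Triebel--Lizorkin embedding $\dot F^{p,2}_{s_0}\subset\dot F^{q,q}_{s_1}$, and a local-means characterization of $\dot F^{q,q}_{s}$ (Theorem~\ref{theorem:TriebelSpacesByLocalMeans}). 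Having the off-diagonal embedding in hand, the paper then needs only the diagonal $p=q$ case of the Bergman--Carleson theorem (Theorem~\ref{theorem:CarlesonBergman}). You do the opposite trade: you prove only the diagonal Bergman embedding $\LC\colon L^p\to A^p_{p-3}(\CB_+)$ --- which coincides with Theorem~\ref{theorem:LaplaceBergmanEmbedding} at $q=p$, since $q/p'-2=p-3$ there --- but prove it by an elementary and self-contained argument (slicewise Hausdorff--Young, a Hölder estimate on each dyadic block of $\RB_+$, and Young's convolution inequality on $\ell^1(\ZB)\ast\ell^p(\ZB)$), and then compensate by invoking the off-diagonal $p\le q$ form of Luecking's Carleson measure theorem $A^p_\alpha\hookrightarrow L^q(d\mu)$, which the paper states only for $p=q$ (though the needed version is available from the cited sources). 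Your computations check out: the slice bound $\|\LC f_n(\cdot+iy)\|_{L^p}\lesssim\|f_n\|_{L^p}\,2^{n(p-2)/p}e^{-2\pi y\cdot 2^n}$ is correct, the prefactors $2^{\mp k(p-2)}$ cancel exactly over the dyadic bands, the kernel $\kappa_j=2^{j(p-2)/p}e^{-\pi 2^j}$ is in $\ell^1$ for $p>2$, the weight index $p-3>-1$ and the Luecking exponent $q(p-1)/p=q/p'$ are right, and your dilation-invariance calculation singling out $\alpha=p-3$ is correct. Each route has its merit: the paper's machinery yields the stronger $L^p\to A^q_{q/p'-2}$ embedding and several Fourier-analytic byproducts along the way, while your argument reaches the Laplace--Carleson theorem with far less overhead and makes the role of the threshold $p>2$ very transparent.
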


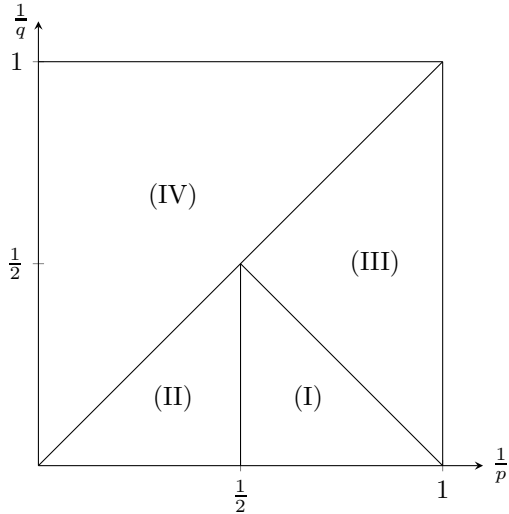
\begin{figure}[h!]
	\begin{center}
		\begin{tikzpicture}
		\begin{axis}[xmin=0, xmax=1.1, ymin=0, ymax=1.1, 
		axis lines = left,
		x label style={at={(axis description cs:1,0)},anchor=west},
		y label style={at={(axis description cs:0,1)},rotate=270,anchor=east},
		xlabel={$\frac{1}{p}$},
		ylabel={$\frac{1}{q}$},
		scaled x ticks=false,
		scaled y ticks=false,
		xtick={0.5,1},
		xticklabels={$\frac{1}{2}$,$1$},
		ytick={0.5,1},
		yticklabels={$\frac{1}{2}$,$1$},
		width=0.618\textwidth,
		height=0.618\textwidth
		]
		\addplot[black] coordinates
		{(0,0) (1,1)};
		\addplot[black] coordinates
		{(0.5,0.5) (1,0)};
		\addplot[black] coordinates
		{(0,1) (1,1)};
		\addplot[black] coordinates
		{(1,1) (1,0)};
		\addplot[black] coordinates
		{(0.5,0.5) (.5,0)};
		\node[black] at (axis cs:0.667,0.167){(I)};
		\node[black] at (axis cs:0.333,0.167){(II)};
		\node[black] at (axis cs:0.833,0.5){(III)};
		\node[black] at (axis cs:0.333,0.667){(IV)};
		\end{axis}
		\end{tikzpicture}
	\end{center}
	\caption{It is previously known that for $(p,q)$ corresponding to the region labelled (I), condition \eqref{eq:CarlesonCondition} is necessary and sufficient for the Laplace--Carleson embedding $\LC\colon  L^p(\RB_+)\to L^q(\CB_+,\mathrm{d}{\mu})$ to be bounded. In the regions (II) and (III), \eqref{eq:CarlesonCondition} is necessary and sufficient under the additional hypothesis that $\mu$ is sectorial. In (IV), \eqref{eq:CarlesonCondition} is not sufficient, even for sectorial measures. Theorem~\ref{theorem:LaplaceCarlesonEmbeddingTheorem} states that in (II), \eqref{eq:CarlesonCondition} is necessary and sufficient without any particular conditions on the measure.}\label{figure:KnownAndNewResults}
\end{figure}

Consider the case (I), i.e. $1\le p\le 2$, $p'\le q<\infty$. The proof that \eqref{eq:CarlesonCondition} is sufficient in this case consists of two main steps: The Hausdorff--Young theorem readily implies that $\LC$ is a bounded map from $L^p(\RB_+)$ to $H^{p'}(\CB_+)$, the standard Hardy space of the upper half plane. The Carleson--Duren embedding theorem (Theorem~\ref{theorem:CarlesonDuren} below) then states that $H^{p'}(\CB_+)\hookrightarrow L^q(\CB_+,d \mu)$ if and only if $\mu$ satisfies \eqref{eq:CarlesonCondition}. The proof of Theorem~\ref{theorem:LaplaceCarlesonEmbeddingTheorem} has the same structure:

We let $A^p_\alpha(\CB_+)$ denote the standard weighted Bergman space of analytic functions on $\CB_+$. For $p>2$, we have the following substitute for the Hausdorff--Young theorem:
\begin{theorem}\label{theorem:LaplaceBergmanEmbedding}
	If $2<p\le q<\infty$, then $\LC\colon L^p(\RB_+)\to A^q_{q/p'
		-2}(\CB_+)$ is bounded.
\end{theorem}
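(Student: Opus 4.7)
The plan is to mirror the structure of case (I): first apply the Hausdorff--Young theorem on each horizontal slice $\{\Im z = y\}$, and then collapse the resulting slicewise bound into a single weighted integral in $y$ via a Hardy-type estimate for the Laplace transform. For fixed $y > 0$ the function $x \mapsto \LC f(x+iy)$ is (up to reflection) the Fourier transform of $g_y(t) := f(t) e^{-2\pi t y}\1_{(0,\infty)}(t)$. Since $q \ge p > 2$ we have $q' \le 2$, so Hausdorff--Young yields
\begin{equation*}
  \|\LC f(\cdot+iy)\|_{L^q(\RB)}^q \le \|g_y\|_{L^{q'}(\RB_+)}^q = \left(\int_0^\infty |f(t)|^{q'} e^{-2\pi q' t y}\,dt\right)^{q/q'}.
\end{equation*}

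Multiplying by $y^{q/p'-2}$ and integrating in $y$, and rescaling to absorb the constant $2\pi q'$, the right-hand side becomes a weighted $L^{q/q'}$-norm of the standard Laplace transform of $\phi := |f|^{q'}$. Writing $r := q/q'$ and $s := p/q'$, the theorem therefore reduces to the one-variable estimate
\begin{equation*}
  \int_0^\infty (\LC\phi)(y)^{r} \, y^{q/p'-2}\,dy \;\lesssim\; \|\phi\|_{L^s(\RB_+)}^{r},
\end{equation*}
which in the original variables reads $\|\LC f\|_{A^q_{q/p'-2}}^q \lesssim \|f\|_{L^p}^q$. A direct computation gives $q/p'-2 = r/s'-1$, so the weight exponent is exactly the scale-invariant one; the hypotheses $p > 2$ and $p \le q$ translate into $s > 1$ and $s \le r$, respectively.

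To verify the displayed Hardy-type inequality I would pass to multiplicative coordinates: setting $u = \log y$ and $\tilde\phi(v) := \phi(e^v)e^{v/s}$ gives $\|\tilde\phi\|_{L^s(\RB)} = \|\phi\|_{L^s(\RB_+)}$, and a short calculation converts the target estimate into the translation-invariant convolution bound $\|\tilde\phi^{-} \ast K\|_{L^r(\RB)} \lesssim \|\tilde\phi\|_{L^s(\RB)}$ on the real line, with $\tilde\phi^{-}(v) := \tilde\phi(-v)$ and kernel $K(w) := e^{w/s'}\exp(-e^w)$. Since $K$ is bounded, decays like $e^{w/s'}$ as $w \to -\infty$, and doubly exponentially as $w \to +\infty$, it lies in every $L^a(\RB)$ with $a \ge 1$; Young's convolution inequality $L^s \ast L^a \to L^r$ then closes the argument, the compatibility relation $\tfrac{1}{a} = 1 + \tfrac{1}{r} - \tfrac{1}{s}$ being admissible with $a \in [1,\infty)$ precisely because $s \le r$.

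The main obstacle I anticipate is the off-diagonal Hardy inequality in the second step. The diagonal case $p = q$ reduces to a classical Hardy inequality for the Laplace transform, but the genuine off-diagonal range $p < q$ seems to require the multiplicative substitution above: it is what turns the weighted $(y,t)$-estimate into a translation-invariant convolution on $\RB$, where Young's inequality applies cleanly and transparently explains why the Bergman exponent $q/p'-2$ is exactly right.
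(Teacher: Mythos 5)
Your proof is correct, and it takes a genuinely different and substantially more elementary route than the paper. The paper reaches Theorem~\ref{theorem:LaplaceBergmanEmbedding} through a chain of Fourier-analytic results: the weighted Hardy--Littlewood inequality (Theorem~\ref{theorem:HardyLittlewoodInequalityNonRadial}, proved by iterated Plancherel and Stein--Weiss interpolation), the bound $\FC\colon L^p\to \dot W^p_{d(2/p-1)}$ (Theorem~\ref{theorem:LebesgueToHomogneneousSobolev}), the Triebel--Lizorkin embedding into $\dot F^{q,q}_{s_1}$ (Theorem~\ref{theorem:TriebelLizorkinEmbeddingTheorem}), and finally the local-means characterisation (Theorem~\ref{theorem:TriebelSpacesByLocalMeans}) to identify the analytic part with a Bergman space. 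You instead slice the Bergman norm horizontally, apply Hausdorff--Young on each slice at the exponent $q'$ (rather than the naive $p'$), and observe that after the substitutions $y=e^u$, $t=e^v$ the resulting weighted Laplace-transform estimate is a translation-invariant convolution bound on $\RB$, closed by Young's inequality; the kernel $K(w)=e^{w/s'}\exp(-e^w)$ is bounded with exponential decay at $-\infty$ and doubly exponential decay at $+\infty$, hence lies in every $L^a$, and the compatibility $\frac{1}{a}=1+\frac{1}{r}-\frac{1}{s}$ with $a\in[1,\infty)$ is exactly equivalent to $1<s\le r$, i.e.\ to $2<p\le q$. The choice of $q'$ in the slicewise step is the essential trick: it produces precisely the weight $q/p'-2=r/s'-1$ and converts the gap between $p$ and $q$ into off-diagonal slack in Young's inequality. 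Your route gains a short, self-contained argument that needs no Besov--Triebel--Lizorkin machinery and no iterated Plancherel; what it does not produce is the collection of intermediate results (Theorems~\ref{theorem:HardyLittlewoodInequalityNonRadial} through \ref{theorem:LebesgueToBesov}), which the paper obtains as by-products and reuses in the weighted variants Theorems~\ref{theorem:WeightedLaplaceBergmanEmbedding}--\ref{theorem:WeightedLaplaceCarlesonEmbedding}. One minor point worth stating explicitly: the slicewise application of Hausdorff--Young requires $g_y\in L^{q'}(\RB_+)$, which follows from $f\in L^p$ and the exponential factor via H\"older with exponent $p/q'>1$.
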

\begin{remark}
	By case (I), Theorem~\ref{theorem:LaplaceBergmanEmbedding} remains valid for $p=2$, provided that $q>2$.
\end{remark}

Theorem~\ref{theorem:LaplaceCarlesonEmbeddingTheorem} is immediate from Theorem~\ref{theorem:LaplaceBergmanEmbedding}, and a Carleson embedding type theorem for Bergman spaces, stated below as Theorem~\ref{theorem:CarlesonBergman}.

For readers with a particular interest in Bergman spaces, we also derive an analogue for analytic functions on the open unit disk $\DB$. We let $dA$ signify integration with respect to area measure on $\CB$.
\begin{theorem}\label{theorem:ZBergmanEmbedding}
	If $2<p\le q<\infty$, then there exists $C=C_{p,q}>0$ such that
	\[
	\left(\int_\DB \left|\sum_{k=0}^\infty a_kw^k\right|^q(1-|w|^2)^{q/p'-2}\,d  A(w)\right)^{1/q}\le C\left(\sum_{k=0}^\infty |a_k|^p\right)^{1/p}
	\]
	for any sequence $(a_k)_{k=0}^\infty$.
\end{theorem}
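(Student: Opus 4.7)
The plan is to deduce Theorem~\ref{theorem:ZBergmanEmbedding} from Theorem~\ref{theorem:LaplaceBergmanEmbedding} by pushing the upper half-plane estimate down to the disc through the covering $w = e^{2\pi i z}$. Given a sequence $(a_k)$ with $\sum |a_k|^p < \infty$, I set $f(t) := \sum_{k=0}^\infty a_k \chi_{[k,k+1)}(t)$, so that $\|f\|_{L^p(\RB_+)}^p = \sum_k |a_k|^p$, and a direct computation of the Laplace transform produces the factorisation
\[
\LC f(z) = \Phi(z)\, g\bigl(e^{2\pi i z}\bigr), \qquad \Phi(z) := \frac{e^{2\pi i z}-1}{2\pi i z}, \quad g(w) := \sum_{k=0}^\infty a_k w^k.
\]
The auxiliary factor $\Phi$ is entire, with $\Phi(0) = 1$, and its complex zero set equals $\ZB \setminus \{0\}$.

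Next, I select the fundamental domain $Q := \{z = x+iy : |x| \le 1/2,\ 0 < y \le 1\}$ for the action $z \mapsto z+\ZB$ on $\CB_+$. Since $\clos Q$ is compact and meets $\ZB$ only at $z = 0$, the continuous function $|\Phi|$ has a positive lower bound on $\clos Q$. Applying Theorem~\ref{theorem:LaplaceBergmanEmbedding} to $f$ and restricting to $Q$, this uniform lower bound on $|\Phi|$ yields
\[
\int_Q \bigl|g\bigl(e^{2\pi i z}\bigr)\bigr|^q (\Im z)^{q/p' - 2}\, dA(z) \lesssim \|(a_k)\|_{\ell^p}^q.
\]

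The map $z \mapsto w = e^{2\pi i z}$ is a biholomorphism from $Q$ onto the annulus $\AC := \{e^{-2\pi} \le |w| < 1\}$, satisfying $\Im z = -\log|w|/(2\pi)$ and $dA(z) = dA(w)/(4\pi^2 |w|^2)$. On $\clos\AC$, the continuous function $-\log|w|/(1 - |w|^2)$---extended by its limit $1/2$ at $|w| = 1$---is bounded away from both $0$ and $\infty$, as is $|w|$. Transferring the previous estimate through the change of variables therefore gives
\[
\int_\AC |g(w)|^q (1 - |w|^2)^{q/p' - 2}\, dA(w) \lesssim \|(a_k)\|_{\ell^p}^q.
\]
On the remaining inner disc $\{|w| < e^{-2\pi}\}$, Hölder's inequality applied termwise yields the pointwise estimate $|g(w)| \le \|(a_k)\|_{\ell^p} (1 - |w|^{p'})^{-1/p'}$, which is bounded by a constant multiple of $\|(a_k)\|_{\ell^p}$ there. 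Since the weight $(1 - |w|^2)^{q/p' - 2}$ is also bounded on this compact subset of $\DB$, the tail contribution is likewise controlled by $\|(a_k)\|_{\ell^p}^q$, and combining the two bounds proves the theorem.

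\textbf{Main obstacle.} The only real subtlety is choosing the fundamental domain $Q$ so that (i) its closure avoids the non-zero integer roots of $\Phi$, and (ii) its height is bounded so that the decay $|\Phi(z)| \sim 1/y$ as $y \to \infty$ does not compromise the lower bound on $|\Phi|$. The symmetric strip of height $1$ used above handles both. Once this lower bound is secured, the remaining steps are routine weight comparisons and a single change of variables.
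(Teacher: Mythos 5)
Your proof is correct and follows essentially the same route as the paper: factor $\LC f(z) = \Phi(z)\,g(e^{2\pi i z})$ with $f = \sum a_k\1_{[k,k+1)}$, use a uniform lower bound for $|\Phi|$ on a fundamental domain, change variables $w = e^{2\pi i z}$, and handle the inner disc by Hölder. The only organizational differences are that the paper first reduces to $p=q$ and works on a thin strip $0<y<\epsilon$ near the boundary, whereas you apply Theorem~\ref{theorem:LaplaceBergmanEmbedding} directly for general $(p,q)$ and use the full strip $0<y\le 1$; both variants are valid.
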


We also obtain some results for the power weighted spaces $L^p(\RB_+,x^\alpha\,d  x)$. The next result is a simultaneous analogue of Theorem~\ref{theorem:LaplaceBergmanEmbedding} and \cite[Theorem~1]{Duren-Gallardo--Gutierrez-Montes--Rodriguez2007:APaley--WienerTheoremForBergmanSpacesWithApplicationToInvariantSubspaces}.
\begin{theorem}\label{theorem:WeightedLaplaceBergmanEmbedding}
	If $2<p\le q<\infty$, and $\alpha <p/q'-1$, then 
	\[
	\LC\colon L^p(\RB_+,x^\alpha\,d  x)\to A^q_{q/p'-2-\alpha q/p}(\CB_+)
	\]
	is bounded.
\end{theorem}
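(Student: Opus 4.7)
The plan is to reduce Theorem~\ref{theorem:WeightedLaplaceBergmanEmbedding} to the unweighted Theorem~\ref{theorem:LaplaceBergmanEmbedding}. The key observation is that the time-side weight $x^\alpha$ can be absorbed into a weight shift on the Bergman side by exploiting the intertwining, under the Laplace transform, of pointwise multiplication by a power of $t$ with fractional differentiation. More precisely, I set $g(t):=t^{\alpha/p}f(t)$, so that $\|g\|_{L^p(\RB_+)}=\|f\|_{L^p(\RB_+,x^\alpha dx)}$, and for $f$ in a dense subclass (say $C_c^\infty(\RB_+)$) I use the identity
\[
\LC g(z)=\int_0^\infty t^{\alpha/p}f(t)e^{2\pi i tz}\,dt = c_\beta\,D^\beta\LC f(z),\qquad \beta:=\alpha/p,
\]
where $D^\beta$ denotes fractional differentiation on $\CB_+$ with symbol $(2\pi it)^\beta$ on the Laplace side. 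For integer $\beta$ this is, up to a constant, the ordinary derivative; for non-integer $\beta$ it can be realised through a Riemann--Liouville type integral, or via the semigroup $F(z)\mapsto F(z+iu)$, $u>0$, acting on $\CB_+$.

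Applying Theorem~\ref{theorem:LaplaceBergmanEmbedding} to $g$ then yields
\[
\|D^\beta\LC f\|_{A^q_{q/p'-2}(\CB_+)} \lesssim \|g\|_{L^p(\RB_+)} = \|f\|_{L^p(\RB_+,x^\alpha dx)}.
\]
The conclusion follows from the standard weight-shift equivalence for Bergman spaces on the upper half-plane,
\[
\|D^\beta F\|_{A^q_\gamma(\CB_+)} \asymp \|F\|_{A^q_{\gamma-q\beta}(\CB_+)},
\]
valid whenever both weights exceed $-1$ and $F$ has the necessary decay at infinity (which is automatic for $F\in\LC L^p$). Taking $\gamma=q/p'-2$ gives $\gamma-q\beta=q/p'-2-\alpha q/p$; the first is $>-1$ since $q>p'$ (a consequence of $q\ge p>2$), and the second is $>-1$ precisely under the hypothesis $\alpha<p/q'-1$, matching the constraint in the statement.

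The main obstacle is to make the fractional derivative identification and the associated weight-shift rigorous for arbitrary real $\beta=\alpha/p$ in the upper half-plane, rather than for integer $\beta$ or in the disk, where the machinery is classical (Hardy--Littlewood, Flett, Djrbashian, and as documented in Zhu's monograph). This is nevertheless a routine adaptation after approximation from $C_c^\infty(\RB_+)$; it parallels the proof, for $p=2$, of the Paley--Wiener theorem for weighted Bergman spaces in \cite{Duren-Gallardo--Gutierrez-Montes--Rodriguez2007:APaley--WienerTheoremForBergmanSpacesWithApplicationToInvariantSubspaces}, which the present statement unifies with Theorem~\ref{theorem:LaplaceBergmanEmbedding}.
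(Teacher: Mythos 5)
Your argument is correct and reaches the stated conclusion, but the technical route differs from the paper's in an instructive way. Both proofs begin by transporting $f$ to $g:=t^{\alpha/p}f\in L^p(\RB_+)$ via the isometry you use, and both then implement the weight $t^{\alpha/p}$ as a fractional-order shift of smoothness on the transform side. The paper carries out this shift on the Fourier side: it observes that $\hat f=\dot I_{-\alpha/p}\hat g$ and invokes the Riesz-potential lifting property $\dot I_{-\alpha/p}\colon \dot W^p_{2/p-1}\to\dot W^p_{(2+\alpha)/p-1}$, after which the unweighted machinery (Theorem~\ref{theorem:TriebelLizorkinEmbeddingTheorem} followed by Theorem~\ref{theorem:TriebelSpacesByLocalMeans}) runs exactly as in the proof of Theorem~\ref{theorem:LaplaceBergmanEmbedding}. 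You instead feed $g$ into Theorem~\ref{theorem:LaplaceBergmanEmbedding} as a black box and perform the shift afterwards on the Bergman scale, via the fractional-derivative equivalence $\|D^\beta F\|_{A^q_\gamma(\CB_+)}\approx\|F\|_{A^q_{\gamma-q\beta}(\CB_+)}$. These are the same shift expressed in two languages: under the identification of $A^q_\gamma(\CB_+)$ with the analytic part of $\dot F^{q,q}_{-(\gamma+1)/q}(\RB)$ provided by Theorem~\ref{theorem:TriebelSpacesByLocalMeans}, your Bergman weight-shift is precisely the Riesz lifting property the paper uses, so the lemma you flag as the ``main obstacle'' can in fact be supplied from the paper's own ingredients. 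The trade-off is one of citable prerequisites: the Riesz lifting property on $\dot F^{p,q}_s$ is textbook material \cite{Triebel1983:TheoryOfFunctionSpacesI}, whereas the half-plane Bergman weight-shift for non-integer and possibly negative $\beta=\alpha/p$ is less commonly recorded, and your claim that the requisite decay at infinity is ``automatic for $F\in\LC L^p$'' deserves at least a line of justification rather than being asserted; as written, this (together with a precise definition of $D^\beta$ on $\CB_+$ for real $\beta$) is the one genuine gap in an otherwise sound variant of the argument.
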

We also obtain:
\begin{theorem}\label{theorem:WeightedLaplaceHardyEmbedding}
	If $2<p<\infty$, then 
	\[
	\LC\colon L^p(\RB_+,x^{p-2}\,d  x)\to H^p(\CB_+)
	\]
	is bounded.
\end{theorem}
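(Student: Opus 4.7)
My plan is to reduce Theorem~\ref{theorem:WeightedLaplaceHardyEmbedding} to Theorem~\ref{theorem:WeightedLaplaceBergmanEmbedding} via the classical Littlewood--Paley characterization of the Hardy space: for $0<p<\infty$ and any $F$ analytic on $\CB_+$ with $F(iy)\to 0$ as $y\to\infty$,
\[
\|F\|_{H^p(\CB_+)}^p \simeq \int_{\CB_+}|F'(z)|^p(\Im z)^{p-1}\,dA(z).
\]
A short H\"older estimate with the dual weight $x^{-(p-2)/(p-1)}$ on $(0,\infty)$ shows that $\LC f(iy)\to 0$ as $y\to\infty$ for every $f\in L^p(\RB_+,x^{p-2}\,dx)$ with $p>2$, so the equivalence applies directly to $F=\LC f$.

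Differentiating under the integral gives $(\LC f)'(z)=\LC g(z)$ with $g(t):=2\pi i\,t\,f(t)$. The weight exponents are engineered so that $g\in L^p(\RB_+,t^{-2}\,dt)$ with $\|g\|_{L^p(t^{-2}dt)}=2\pi\|f\|_{L^p(x^{p-2}dx)}$. I then apply Theorem~\ref{theorem:WeightedLaplaceBergmanEmbedding} with $q=p$ and $\alpha=-2$: the admissibility condition $\alpha<p/q'-1=p-2$ is automatic, and the target Bergman weight works out to $q/p'-2-\alpha q/p=(p-1)-2+2=p-1$. Hence
\[
\int_{\CB_+}|(\LC f)'(z)|^p(\Im z)^{p-1}\,dA(z)=\|\LC g\|_{A^p_{p-1}(\CB_+)}^p\lesssim\|g\|_{L^p(t^{-2}dt)}^p\simeq\|f\|_{L^p(x^{p-2}dx)}^p,
\]
and comparing with the Littlewood--Paley equivalence yields $\|\LC f\|_{H^p}\lesssim\|f\|_{L^p(x^{p-2}dx)}$, as required.

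The principal obstacle is really just locating or reproducing the Littlewood--Paley equivalence on the upper half plane in the exact form above, since many textbook references present the analogous disk version or phrase things via area or tent-space norms. I would either transport the disk identity through a conformal map (carefully tracking the decay-at-infinity assumption) or cite a direct half-plane reference; everything else is bookkeeping around the parameters of Theorem~\ref{theorem:WeightedLaplaceBergmanEmbedding}.
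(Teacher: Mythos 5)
Your argument is sound in substance but follows a genuinely different route from the paper's. The paper's proof is two lines: run the Riesz-potential lifting argument from the proof of Theorem~\ref{theorem:WeightedLaplaceBergmanEmbedding} at the endpoint $\alpha=p-2$, which combined with Theorem~\ref{theorem:LebesgueToHomogneneousSobolev} gives $\FC\colon L^p(\RB_+,x^{p-2}\,dx)\to\dot W^p_0(\RB)=L^p(\RB)$; since $f$ is supported on $\RB_+$, the boundary function $\check f\in L^p(\RB)$ is automatically the trace of $\LC f$, which therefore lies in $H^p(\CB_+)$. Your route instead differentiates $\LC f$, shifts the weight by two orders, and feeds the result into the already-proved Bergman embedding (Theorem~\ref{theorem:WeightedLaplaceBergmanEmbedding} with $q=p$, $\alpha=-2$), then reassembles the $H^p$ norm by a Hardy--Littlewood area inequality. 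The parameter bookkeeping is right: $\alpha=-2<p/q'-1=p-2$, the Bergman weight comes out to $p-1$, and $\|g\|_{L^p(t^{-2}dt)}=2\pi\|f\|_{L^p(x^{p-2}dx)}$. The gain of your approach is that it stays at the level of area integrals and avoids the boundary-value identification; the cost is that it imports an external classical theorem (half-plane Hardy--Littlewood) that the paper's proof does not need, and also uses the considerably heavier Theorem~\ref{theorem:WeightedLaplaceBergmanEmbedding} where the paper can get by with the $q=p$ Sobolev case directly.

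One genuine inaccuracy to fix: the displayed relation
$\|F\|_{H^p(\CB_+)}^p\simeq\int_{\CB_+}|F'(z)|^p(\Im z)^{p-1}\,dA(z)$
is \emph{not} an equivalence for $p\ne 2$. The classical Hardy--Littlewood theorem is one-sided: for $p\ge 2$ one has the upper bound $\|F\|_{H^p}^p\lesssim|F(0)|^p+\int(1-|z|)^{p-1}|F'|^p\,dA$ on the disk (with the $|F(0)|^p$ term replaced by a decay-at-infinity normalization on $\CB_+$), and the reverse inequality holds only for $p\le 2$; both reverse directions fail in general. Fortunately you only invoke the direction that is valid for $p>2$, so the proof stands, but you should state a one-sided inequality rather than a comparison. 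You should also be a little careful that the disk-to-half-plane transfer is not a naked conformal change of variable, since the boundary measure acquires a Jacobian factor; a direct half-plane reference or a localized argument is cleaner than pushing the disk identity through a Möbius map, as you yourself flag.
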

A weighted analogue of Theorem~\ref{theorem:LaplaceCarlesonEmbeddingTheorem} becomes:
\begin{theorem}\label{theorem:WeightedLaplaceCarlesonEmbedding}
	Let $2<p\le q<\infty$, and $\alpha \le p/q'-1$. Then 
	\[
	\LC\colon L^p(\RB_+,x^\alpha\,d  x)\to L^q(\CB_+,d \mu)
	\]
	is bounded if and only if $\mu$ satisfies
	\[
	\mu(Q_I)\lesssim|I|^{q/p'-\alpha q/p}\quad\textnormal{for all intervals}\quad I\subset\RB.
	\]
\end{theorem}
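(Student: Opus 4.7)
The plan is to prove the two directions separately and mirror the structural argument used to derive Theorem~\ref{theorem:LaplaceCarlesonEmbeddingTheorem} from Theorem~\ref{theorem:LaplaceBergmanEmbedding} and the Bergman Carleson embedding (Theorem~\ref{theorem:CarlesonBergman}), now in weighted form.

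For \emph{necessity}, I would test on the one-parameter family of modulated indicators $f_{x_0,h}(t)=e^{-2\pi i x_0 t}\mathbf 1_{[0,h]}(t)$ for $x_0\in\RB$, $h>0$. Direct computation gives $\|f_{x_0,h}\|_{L^p(\RB_+,x^\alpha\,dx)}\asymp h^{(\alpha+1)/p}$ (this is where $\alpha>-1$ is implicitly used; otherwise one passes to a smooth bump with the same scaling). On the Carleson box $Q_I$ centred at $x_0$ with $|I|\asymp 1/h$, we have $|(z-x_0)t|\lesssim 1$ for $t\in[0,h]$, hence $|\LC f_{x_0,h}(z)|\gtrsim h$. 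Boundedness of $\LC$ therefore yields $h^q\mu(Q_I)\lesssim h^{q(\alpha+1)/p}$, and substituting $h=1/|I|$ rearranges precisely to the claimed estimate $\mu(Q_I)\lesssim|I|^{q/p'-\alpha q/p}$.

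For \emph{sufficiency in the open range} $\alpha<p/q'-1$, I would simply chain the two machines already at hand. Theorem~\ref{theorem:WeightedLaplaceBergmanEmbedding} delivers $\LC\colon L^p(\RB_+,x^\alpha\,dx)\to A^q_{\beta}(\CB_+)$ with Bergman weight $\beta=q/p'-2-\alpha q/p>-1$. Theorem~\ref{theorem:CarlesonBergman}, applied with matched exponents, then states that $A^q_\beta(\CB_+)\hookrightarrow L^q(\CB_+,d\mu)$ is equivalent to $\mu(Q_I)\lesssim|I|^{\beta+2}=|I|^{q/p'-\alpha q/p}$, which is exactly the hypothesis on $\mu$. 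Composing the two embeddings finishes this range.

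The \emph{main obstacle} is the endpoint $\alpha=p/q'-1$: the Bergman weight index degenerates to $\beta=-1$, so Theorem~\ref{theorem:WeightedLaplaceBergmanEmbedding} is no longer available and the Carleson condition collapses to $\mu(Q_I)\lesssim|I|$. Here I would bypass the Bergman step entirely. For the diagonal $p=q$ this weight equals $p-2$, so Theorem~\ref{theorem:WeightedLaplaceHardyEmbedding} gives $\LC\colon L^p(\RB_+,x^{p-2}\,dx)\to H^p(\CB_+)$, and the Carleson--Duren theorem (Theorem~\ref{theorem:CarlesonDuren}) closes the argument because $\mu(Q_I)\lesssim|I|$ is precisely its hypothesis. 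For $p<q$ at the endpoint I would obtain the required Hardy-type bound $\LC\colon L^p(\RB_+,x^{p/q'-1}\,dx)\to H^q(\CB_+)$ by extending the proof of Theorem~\ref{theorem:WeightedLaplaceHardyEmbedding} (essentially a Littlewood--Paley/Fourier argument on horizontal slices $\LC f(\cdot+iy)=\widehat{f(\cdot)e^{-2\pi y\cdot}\mathbf 1_{\RB_+}}$), and again invoke Carleson--Duren. Verifying this endpoint Hardy embedding, for which neither the Hausdorff--Young theorem nor Theorem~\ref{theorem:WeightedLaplaceBergmanEmbedding} applies directly, is the step I expect to require the most care.
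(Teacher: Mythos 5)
Your proposal follows the same three-part structure as the paper: a test-function argument for necessity, Theorem~\ref{theorem:WeightedLaplaceBergmanEmbedding} plus Theorem~\ref{theorem:CarlesonBergman} for sufficiency when $\alpha<p/q'-1$, and a Hardy-space argument at the endpoint $\alpha=p/q'-1$. For necessity the paper uses the decaying exponential $f(t)=e^{-2\pi i\conj{\lambda_I}t}$ with $\lambda_I$ the centre of $Q_I$, whereas you use the truncated modulation $e^{-2\pi i x_0 t}\1_{[0,h]}$; both give the same scaling $\|f\|_{L^p(\RB_+,x^\alpha dx)}\asymp|I|^{-(\alpha+1)/p}$ and $|\LC f|\gtrsim|I|^{-1}$ on $Q_I$, so this is an inessential variation (and yes, $\alpha>-1$ is an implicit standing assumption in both). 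The one place where you actually go beyond what the paper spells out is the off-diagonal endpoint: the paper states that for $\alpha=p/q'-1$ one ``uses Theorem~\ref{theorem:WeightedLaplaceHardyEmbedding} and Theorem~\ref{theorem:CarlesonDuren}'', but Theorem~\ref{theorem:WeightedLaplaceHardyEmbedding} as stated only gives $\LC\colon L^p(\RB_+,x^{p-2}dx)\to H^p(\CB_+)$, which matches $\alpha=p/q'-1$ only when $p=q$. You correctly identify that for $p<q$ the missing ingredient is $\LC\colon L^p(\RB_+,x^{p/q'-1}dx)\to H^q(\CB_+)$, and your proposed route through the proof (rather than the statement) of Theorem~\ref{theorem:WeightedLaplaceHardyEmbedding} does close it: one has $\FC\colon L^p(\RB_+,x^\alpha dx)\to\dot W^p_{(2+\alpha)/p-1}(\RB)$, which at $\alpha=p/q'-1$ gives $\dot W^p_{1/p-1/q}(\RB)$, and Theorem~\ref{theorem:TriebelLizorkinEmbeddingTheorem} (with $s_0=1/p-1/q$, $s_1=0$, $p_0=p$, $p_1=q$) embeds this in $\dot W^q_0(\RB)=L^q(\RB)$; the Poisson-kernel identification then gives $\LC\colon L^p(\RB_+,x^{p/q'-1}dx)\to H^q(\CB_+)$, and Carleson--Duren with exponents $(q,q)$ finishes. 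So the step you flag as requiring the most care is genuinely needed, but it is a routine extension of the argument already in Section~\ref{sec:LaplaceResults}; once you carry it out, your proof coincides with the paper's.
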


We now transition into a discussion about the Fourier transform $\FC$, and the Hausdorff--Young theorem. In what follows, the underlying domain of any space of distributions is $\RB^d$, unless we indicate otherwise. For example, $L^p$ denotes $L^p(\RB^d)$. We let $|x|$ denote the Euclidean norm of $x=(x_1,\ldots,x_d)\in\RB^d$.

The Hausdorff--Young theorem states that if $1\le p\le 2$, then $\FC\colon L^p\to L^{p'}$, or equivalently $\FC^{-1}\colon L^p\to L^{p'}$. The original version of this result was an analogous statement about periodic functions, see \cite{Young1913:OnTheDeterminationOfTheSummabilityOfAFunctionByMeansOfItsFourierConstants} and \cite{Hausdorff1923:EineAusdehnungDesParsevalschenSatzesUberFourierreihen}, whereas the essence of the present statement is found in \cite{Titchmarsh1924:AContributionToTheTheoryOfFourierTransforms}. For a more careful historical account, we refer to the survey \cite{Butzer1994:TheHausdorff-YoungTheoremsOfFourierAnalysisAndTheirImpact}.

If $p>2$ and $f\in L^p$, then $\check f:=\FC^{-1}f$ in general needs to be interpreted as a tempered distribution. As an indication of this, we mention a theorem by Hardy and Littlewood \cite[p.~237]{Hardy-Littlewood1914:SomeProblemsOfDiophantineApproximation}, stating that the formal series $\sum_{k=1}^{\infty}\frac{1}{k^{1/2}}\cos (k^2\pi x)$ is not the Fourier series of any function. 


Interpreting $\check f$ as the distributional boundary values of $\LC f$, Theorem~\ref{theorem:LaplaceBergmanEmbedding} gives us a quantitative estimate on the regularity of $\check f$. The proof of Theorem~\ref{theorem:LaplaceBergmanEmbedding} is based on the relation between $\LC$ and $\FC^{-1}$, iterated use of the Plancherel theorem, and complex interpolation. By a similar (in fact simpler) argument we obtain a stronger result:
\begin{theorem}\label{theorem:HardyLittlewoodInequalityNonRadial}
	Let $p\ge 2$. If $f\in L^1$, then
	\begin{align*}
	\int_{\RB^d}|\hat f(\xi)|^p\,d  \xi
	\lesssim
	\int_{\RB^d}|f(x)|^{p}\left(\prod_{k=1}^d|x_k|\right)^{p-2}\,d  x.
	\end{align*}
\end{theorem}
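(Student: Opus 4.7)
The weight $\prod_{k=1}^d|x_k|^{p-2}$ and the Fourier transform $\FC=\FC_1\otimes\cdots\otimes\FC_d$ both factorise across coordinates, so the $d$-dimensional statement reduces by iterated Fubini to the one-dimensional inequality
\[
\int_\RB|\hat f(\xi)|^p\,d\xi\le C_p\int_\RB|f(x)|^p|x|^{p-2}\,dx,\quad f\in L^1(\RB),\ p\ge 2.
\]
Applying this 1D bound in the $x_1$ variable (treating $x_2,\ldots,x_d$ as parameters) yields the $|x_1|^{p-2}$ factor; iterating in turn over the remaining coordinates produces the full product weight.

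For the 1D inequality I propose to mimic the proof of Theorem~\ref{theorem:LaplaceBergmanEmbedding}, namely Plancherel combined with complex interpolation. The base endpoint $p=2$ is Plancherel's identity (with the trivial weight $|x|^0=1$). For $p>2$ consider the analytic family
\[
T_z f(\xi):=\widehat{f(x)\,|x|^{-z}}(\xi), \quad 0\le \Re z<1.
\]
On the imaginary axis $|x|^{-it}$ is a unit-modulus multiplier, so $T_{it}\colon L^2\to L^2$ uniformly in $t$. On a second vertical line $\Re z=\alpha_0$ with $0<\alpha_0<1$, use the Riesz-kernel identity $\widehat{|x|^{-z}}=c_z|\xi|^{z-1}$ to rewrite $T_z f=c_z\,\hat f*|\xi|^{z-1}$, and invoke Hardy--Littlewood--Sobolev on the fractional convolution together with Hausdorff--Young to obtain $T_{\alpha_0+it}\colon L^{p_0}\to L^{p_0}$ for a suitable $p_0$. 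Stein's analytic interpolation theorem then delivers $T_{(p-2)/p}\colon L^p\to L^p$, which after the substitution $g(x)=|x|^{(p-2)/p}f(x)$ is precisely the desired 1D inequality.

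The main obstacle is the choice of the second interpolation endpoint. One cannot take $\Re z=1$ since $|x|^{-1}$ is not locally integrable in dimension one, and the formal $p=\infty$ endpoint of the target inequality is itself false: for $f(x)=x^{-1}\,\1_{[2,N]}(x)$ one has $\|\hat f\|_\infty\gtrsim\log N$ while $\||x|f\|_\infty=1$. The endpoint must therefore be taken strictly inside the strip, with a compensating Hardy--Littlewood--Sobolev bound. The promised simplification compared to the proof of Theorem~\ref{theorem:LaplaceBergmanEmbedding} stems from the absence of the additional $y$-integration over $\RB_+$ that the Bergman setting requires; once the analytic family is set up, no Bergman-weight mixed-norm estimate is needed.
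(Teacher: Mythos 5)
Your tensorization reduction to $d=1$ is correct and is a nice presentational simplification (the paper works directly in $\RB^d$), but the proposed proof of the one-dimensional inequality has a genuine gap at the crucial step: the second interpolation endpoint cannot be established by the route you describe.

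After the substitution $g=|x|^{(p-2)/p}f$, the $1$D statement becomes $\|T_\alpha g\|_{L^p}\lesssim\|g\|_{L^p}$ for $T_\alpha g=\widehat{|x|^{-\alpha}g}$ with $\alpha=1-2/p$. For Stein interpolation between $T_0\colon L^2\to L^2$ and $T_{\alpha_0}\colon L^{p_0}\to L^{p_0}$ to yield $T_\alpha\colon L^p\to L^p$ with $p>2$, one is forced to take $\alpha_0=1-2/p_0$, hence $p_0>2$ (the interpolation parameter propagates linearly, and starting at $p=2$ one can only reach $p>2$ from an endpoint with $p_0>2$). But your derivation of that endpoint reads $T_{\alpha_0}=I_{\alpha_0}\circ\FC$, with $\FC\colon L^{p_0}\to L^{p_0'}$ by Hausdorff--Young (needs $p_0\le 2$) and then $I_{\alpha_0}\colon L^{p_0'}\to L^r$ by Hardy--Littlewood--Sobolev (needs $\frac1r=\frac1{p_0'}-\alpha_0<\frac1{p_0'}$, so $r>p_0'\ge 2$). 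Every endpoint obtainable by HLS$\circ$HY therefore has source exponent $\le 2$ and target exponent $\ge 2$, and interpolating any such endpoint against $T_0\colon L^2\to L^2$ stays in that regime --- it never reaches the diagonal $L^p\to L^p$ with $p>2$. The ``suitable $p_0$'' does not exist for this argument; the constraint $\alpha_0=1-2/p_0$ also makes the putative endpoint precisely equivalent to the statement being proved, so there is a circularity as well.

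The paper takes a genuinely different route. It first proves the inequality for the dyadic exponents $p=2^N$: setting $f_0=f$ and $f_N=f_{N-1}\ast f_{N-1}$, Plancherel gives $\int|\hat f|^{2^N}=\int|f_{N-1}|^2$, and a key convolution lemma,
\begin{align*}
\int_{\RB^d}|(h\ast h)(x)|^p\,\Pi_x^{\,p-2}\,dx\lesssim\int_{\RB^d}|h(x)|^{2p}\,\Pi_x^{\,2p-2}\,dx,
\end{align*}
proved by H\"older plus scaling identities, is iterated $N$ times to reach $\int|f|^{2^N}\Pi_x^{2^N-2}$. The non-dyadic exponents are then filled in by Stein--Weiss interpolation (between consecutive dyadic exponents, with the matching power weights). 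This avoids the need for any second analytic endpoint of the sort you seek, and is the structural reason the $p>2$ range is accessible. To salvage your analytic-family scheme, you would have to produce the diagonal endpoint $T_{\alpha_0}\colon L^{p_0}\to L^{p_0}$ by some means that does not pass through Hausdorff--Young --- at which point you have essentially reproved the theorem at $p_0$, as the paper does at $p_0=4$.
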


While Theorem~\ref{theorem:LaplaceBergmanEmbedding}, and the proof leading up to Theorem~\ref{theorem:HardyLittlewoodInequalityNonRadial}, was discovered independently, the corresponding theorem for periodic functions of one variable dates back to Hardy and Littlewood \cite[Theorem~3]{Hardy-Littlewood1927:SomeNewPropertiesOfFourierConstants}. By the inequality of geometric and arithmetic means, and the equivalence of norms on $\RB^d$, Theorem~\ref{theorem:HardyLittlewoodInequalityNonRadial} implies the following result, which appears to be a folklore generalization of the theorem by Hardy and Littlewood.
\begin{theorem}\label{theorem:HardyLittlewoodInequalityRadial}
	Let $p\ge 2$. If $f\in L^1$, then
	\begin{align*}
	\int_{\RB^d}|\hat f(\xi)|^p\,d  \xi
	\lesssim
	\int_{\RB^d}|f(x)|^{p}|x|^{(p-2)d}\,d  x.
	\end{align*}
\end{theorem}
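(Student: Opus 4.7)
The plan is to derive Theorem~\ref{theorem:HardyLittlewoodInequalityRadial} directly from Theorem~\ref{theorem:HardyLittlewoodInequalityNonRadial} by a pointwise comparison of the two weights. The only thing to check is that, for every $x \in \RB^d$ and every $p \ge 2$,
\[
\left(\prod_{k=1}^d |x_k|\right)^{p-2} \le C_{p,d}\,|x|^{(p-2)d},
\]
for some constant $C_{p,d}$. Since $p-2 \ge 0$, this reduces to the pointwise bound $\prod_{k=1}^d |x_k| \le C\,|x|^d$.

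This bound is essentially trivial, and can be justified in the two ways alluded to in the statement. Most directly, each coordinate satisfies $|x_k| \le |x|$ (where $|\cdot|$ is the Euclidean norm), so multiplying the $d$ inequalities gives $\prod_{k=1}^d |x_k| \le |x|^d$ with $C=1$. Alternatively, by the AM--GM inequality,
\[
\prod_{k=1}^d |x_k| \le \left(\frac{1}{d}\sum_{k=1}^d |x_k|\right)^{d} = \left(\frac{\|x\|_1}{d}\right)^d,
\]
and the equivalence of the $\ell^1$- and $\ell^2$-norms on $\RB^d$, namely $\|x\|_1 \le \sqrt{d}\,|x|$, then yields $\prod_{k=1}^d |x_k| \le d^{-d/2}\,|x|^d$.

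Once this pointwise comparison is in place, multiplying by $|f(x)|^p$ and integrating over $\RB^d$ gives
\[
\int_{\RB^d}|f(x)|^p\left(\prod_{k=1}^d|x_k|\right)^{p-2} dx \;\lesssim\; \int_{\RB^d}|f(x)|^p |x|^{(p-2)d}\,dx,
\]
and then Theorem~\ref{theorem:HardyLittlewoodInequalityNonRadial} furnishes the desired inequality
\[
\int_{\RB^d}|\hat f(\xi)|^p\,d\xi \;\lesssim\; \int_{\RB^d}|f(x)|^p|x|^{(p-2)d}\,dx.
\]
There is no real obstacle here: the radial inequality is a strictly weaker statement than its non-radial counterpart, obtained by majorising the anisotropic weight $\prod |x_k|$ by the isotropic quantity $|x|^d$. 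The content of the theorem lies entirely in Theorem~\ref{theorem:HardyLittlewoodInequalityNonRadial}; this reduction is merely a change of weight.
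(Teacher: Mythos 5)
Your proposal is correct and follows exactly the route the paper indicates: Theorem~\ref{theorem:HardyLittlewoodInequalityRadial} is deduced from Theorem~\ref{theorem:HardyLittlewoodInequalityNonRadial} via the pointwise bound $\prod_{k=1}^d|x_k|\le C\,|x|^d$, which the paper attributes to the AM--GM inequality together with the equivalence of norms on $\RB^d$, precisely as you argue. Your alternative observation that $|x_k|\le|x|$ already gives this with $C=1$ is an even more direct way to see it, but there is no substantive difference.
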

Even though Theorem~\ref{theorem:LaplaceBergmanEmbedding} will eventually be derived from Theorem~\ref{theorem:HardyLittlewoodInequalityRadial}, Theorem~\ref{theorem:HardyLittlewoodInequalityNonRadial} seems interesting in its own right, as an example of a weighted inequality for the Fourier transform, where the weight is non-radial.

Let $W^{p}_s$ denote the standard Sobolev space of fractional order $s$, and $\dot W^{p}_s$ its homogeneous counterpart. Although Theorem~\ref{theorem:HardyLittlewoodInequalityRadial} is part of the folklore, the following (nearly immediate) consequence appears to be absent in the literature.
\begin{theorem}\label{theorem:LebesgueToHomogneneousSobolev}
	If $p\ge 2$, then $\FC\colon L^p\to\dot W^{p}_{d(2/p-1)}$ is bounded.
\end{theorem}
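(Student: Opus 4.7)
The plan is to reduce the statement directly to Theorem~\ref{theorem:HardyLittlewoodInequalityRadial} by unwinding the homogeneous Sobolev norm. Let $s := d(2/p - 1)$, which is nonpositive for $p \ge 2$. For Schwartz $f$, I would use $\widehat{\FC f}(\xi) = f(-\xi)$ together with a change of variables $\xi \mapsto -\xi$ to rewrite
\[
\|\FC f\|_{\dot W^p_s} \asymp \|\FC^{-1}(|\xi|^s \widehat{\FC f})\|_{L^p} = \|\FC(|\cdot|^s f)\|_{L^p}.
\]
Thus the theorem reduces to the inequality $\|\FC(|\cdot|^s f)\|_{L^p} \lesssim \|f\|_{L^p}$.

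Setting $g(x) := |x|^s f(x)$, note that $s + d = 2d/p > 0$, which ensures that $|x|^s$ is locally integrable near the origin, so that $g \in L^1$ whenever $f$ is Schwartz. Hence Theorem~\ref{theorem:HardyLittlewoodInequalityRadial} applies and yields
\[
\|\FC g\|_{L^p}^p \lesssim \int_{\RB^d} |g(x)|^p |x|^{(p-2)d} \, d x = \int_{\RB^d} |f(x)|^p |x|^{sp + (p-2)d} \, d x.
\]
The arithmetic $sp + (p-2)d = (2-p)d + (p-2)d = 0$ shows the weight exponent collapses, so the right-hand side is exactly $\|f\|_{L^p}^p$. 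A standard density argument then extends the inequality from Schwartz functions to all of $L^p$ (for $p < \infty$; the case $p = \infty$ is degenerate and, as is customary with such statements, can be treated separately or excluded).

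The main obstacle is really just bookkeeping: one must correctly interpret $\dot W^p_s$ for negative $s$ (which is the Riesz-potential space $(-\Delta)^{-s/2}L^p$), keep the Fourier-transform conventions consistent through the reflections, and verify that the resulting weighted inequality matches Theorem~\ref{theorem:HardyLittlewoodInequalityRadial} exactly. Once the cancellation $sp + (p-2)d = 0$ is observed, the theorem follows with no genuine new computation.
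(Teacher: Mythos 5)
Your proof is correct and follows essentially the same route as the paper: reduce to Theorem~\ref{theorem:HardyLittlewoodInequalityRadial} by unwinding the $\dot W^p_s$-norm through the Riesz-potential lifting and the Littlewood--Paley identification $L^p=\dot F^{p,2}_0$, exploiting the cancellation $sp+(p-2)d=0$, and then pass to general $f\in L^p$ by density and Fatou. The only real difference is the approximating sequence: you use Schwartz $f$ (with $g=|\cdot|^s f\in L^1$ because $s>-d$), while the paper uses the dyadic cut-offs $f_N=\sum_{|k|\le N}\hat\varphi_k f$, which keep $g_N$ supported away from the origin and give $\|f_N\|_{L^p}\le\|f\|_{L^p}$ immediately; either choice works.
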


A technical remark may be in order. Take $p>2$, and $s=d(2/p-1)$. In particular, $s<0$. Moreover, let $f\in L^p$. If in addition, $|f|^p|\cdot|^{(p-2)d}$ is integrable, then the Riesz potential fractional order antiderivative $\dot I_{s}\hat f$ is well-defined as a tempered distribution, and $\dot I_{s}\hat f\in L^p$. For general $f\in L^p$, this understanding of $\dot I_{s}\hat f$ is to naive. Instead, one needs to identify $\hat f\in\SC'$ with the equivalence class $[\hat f]\in\SC'/\PC$, where $\PC$ denotes the space of polynomials. Then $\dot I_{s}[\hat f]\in \SC'/\PC$ is well defined. The Littlewood--Paley theorem offers a canonical way to identify $\dot I_{s}[\hat f]$ with an element of $L^p$. By said identification, $\dot W^{p}_s$ becomes a proper subspace of $W^{p}_s$ (recall that $s<0$). We therefore obtain a variation of the above result:

\begin{theorem}\label{theorem:LebesgueToNonHomogneneousSobolev}
	If $p\ge 2$, then $\FC\colon L^p\to W^{p}_{d(2/p-1)}$ is bounded.
\end{theorem}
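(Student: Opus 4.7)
The plan is to derive the statement from Theorem~\ref{theorem:LebesgueToHomogneneousSobolev} via a Littlewood--Paley decomposition. Fix a radial cut-off $\phi_0\in C^\infty_c(\RB^d)$ with $\phi_0\equiv 1$ on $\{|\eta|\le 1/2\}$ and supported in $\{|\eta|\le 1\}$, let $S_0$ denote the associated low-frequency Fourier multiplier, and let $\{\Delta_j\}_{j\ge 1}$ be a standard dyadic resolution of $1-\phi_0$. Setting $s=d(2/p-1)\le 0$, the Littlewood--Paley characterization of $W^p_s$ yields
\begin{align*}
\|\FC f\|_{W^p_s}\approx\|S_0\FC f\|_{L^p}+\bigl\|(\textstyle\sum_{j\ge 1}(2^{js}|\Delta_j\FC f|)^2)^{1/2}\bigr\|_{L^p},
\end{align*}
so it suffices to bound each of the two terms on the right by $\|f\|_{L^p}$.

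The high-frequency term is trivially dominated by the corresponding Littlewood--Paley square function taken over all $j\in\ZB$, which is (up to equivalence) the norm on $\dot W^p_s$. By Theorem~\ref{theorem:LebesgueToHomogneneousSobolev} this is $\lesssim\|f\|_{L^p}$.

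For the low-frequency term, I would exploit the identity $\widehat{S_0\FC f}(\eta)=\phi_0(\eta)f(-\eta)$, which follows from $\widehat{\FC f}(\eta)=f(-\eta)$ in the sense of tempered distributions. The right-hand side is a compactly supported $L^p$ function of $\eta$; since $p\ge 2$ we have $p'\le p$, so H\"older's inequality on $\supp\phi_0$ places it in $L^{p'}$ with norm $\lesssim\|f\|_{L^p}$. The Hausdorff--Young inequality $\FC^{-1}\colon L^{p'}\to L^p$ then yields $\|S_0\FC f\|_{L^p}\lesssim\|f\|_{L^p}$. The borderline case $p=2$ reduces directly to Plancherel, since $s=0$ and $W^2_0=L^2$.

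The main obstacle is bookkeeping rather than analytic depth: one must carefully state the Littlewood--Paley characterization of $W^p_s$ at negative smoothness $s$, and justify the identification of $\dot W^p_s$ as a subspace of $W^p_s$ referenced in the remark preceding the theorem. Once these standard facts are granted, the argument above is essentially immediate and explains the author's description of this as a ``nearly immediate consequence'' of the homogeneous result.
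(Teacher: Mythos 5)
Your proof is correct. The paper does not supply a written proof of this theorem; the argument is implicit in the discussion preceding the statement, and amounts to composing $\FC\colon L^p\to\dot W^p_s$ (Theorem~\ref{theorem:LebesgueToHomogneneousSobolev}) with the embedding $\dot W^p_s\hookrightarrow W^p_s$ for $s<0$, which is established in Section~\ref{sec:PreliminariesAndNotatiion} by summing the geometric series $\sum_{k<0}\|\varphi_k\ast\hat f\|_{L^p}\lesssim\sum_{k<0}2^{-sk}\|\hat f\|_{\dot F^{p,q}_s}$. Your route is the same in spirit --- split off the low-frequency block and feed the high frequencies into the homogeneous result --- but your treatment of the low-frequency term is genuinely more elementary: rather than invoking the homogeneous theorem a second time via the geometric series, you use the identity $\widehat{S_0\FC f}(\eta)=\phi_0(\eta)f(-\eta)$, note that $\phi_0 f(-\cdot)$ is a compactly supported $L^p$ function hence in $L^{p'}$ by H\"older (using $p\ge 2$), and apply Hausdorff--Young. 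This decouples the low-frequency estimate from Theorem~\ref{theorem:LebesgueToHomogneneousSobolev}, which makes the structure of the argument slightly more transparent, at the cost of needing to verify (as you flag) the Littlewood--Paley characterisation of $W^p_s$ and the fact that $\hat f$ coincides with the canonical representative of $[\hat f]\in\SC'/\PC$. Both approaches are correct and of comparable length; yours is arguably a cleaner self-contained proof, while the paper's is a one-line consequence of machinery already set up in the preliminaries. One small inaccuracy: the ``nearly immediate'' phrase in the paper refers to Theorem~\ref{theorem:LebesgueToHomogneneousSobolev} as a consequence of Theorem~\ref{theorem:HardyLittlewoodInequalityRadial}, not to the present statement, but this does not affect the mathematics.
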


This result is less subtle: If $f\in L^p$, then the Bessel potential fractional antiderivative $I_s\hat f$ is a tempered distribution, and an element of $L^p$.

By the duality $\left(\dot W^p_s\right)'=\dot W^{p'}_{-s}$ we obtain:
\begin{theorem}\label{theorem:LebesgueToNonHomogneneousSobolevDual}
	If $p\in(1,2)$, then $\FC\colon \dot W^{p}_{d(2/p-1)}\to L^p$ is bounded.
\end{theorem}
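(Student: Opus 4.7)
The plan is to derive this directly from Theorem~\ref{theorem:LebesgueToHomogneneousSobolev} by the duality argument sketched in the excerpt. Since $p\in(1,2)$, the conjugate exponent $p'$ lies in $(2,\infty)$, so applying Theorem~\ref{theorem:LebesgueToHomogneneousSobolev} with $p'$ in place of $p$ yields a bounded operator
\[
\FC\colon L^{p'}\to\dot W^{p'}_{d(2/p'-1)}.
\]
A short computation gives
\[
-d\!\left(\frac{2}{p'}-1\right)=d\!\left(1-\frac{2(p-1)}{p}\right)=d\!\left(\frac{2}{p}-1\right),
\]
so combining the duality $(\dot W^{p'}_s)'=\dot W^{p}_{-s}$ stated just above with $(L^{p'})'=L^p$, the Banach-space adjoint of the displayed operator is a bounded map $\dot W^{p}_{d(2/p-1)}\to L^p$.

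To finish, I would identify this adjoint with $\FC$ itself. With respect to the bilinear pairing $\langle f,g\rangle=\int_{\RB^d}fg\,dx$, the Fourier transform is formally self-adjoint on Schwartz functions, since $\int \hat f(\xi)g(\xi)\,d\xi=\int f(x)\hat g(x)\,dx$ by Fubini. Hence the adjoint agrees with $\FC$ on a dense subspace, and the identification from the preceding step extends by continuity to all of $\dot W^{p}_{d(2/p-1)}$.

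The main obstacle is the last identification: the duality $(\dot W^{p}_s)'=\dot W^{p'}_{-s}$ is realized via a pairing on $\SC'/\PC$ rather than the naive $\int fg\,dx$ integration, since homogeneous spaces are defined modulo polynomials. One must therefore confirm that the two pairings agree on Schwartz representatives (up to a normalizing constant), most conveniently through the Littlewood--Paley decomposition alluded to in the technical remark preceding Theorem~\ref{theorem:LebesgueToNonHomogneneousSobolev}. Once this compatibility is in place, the proof reduces to the index computation and a density extension.
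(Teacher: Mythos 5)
Your proposal matches the paper's approach exactly: the paper derives Theorem~\ref{theorem:LebesgueToNonHomogneneousSobolevDual} from Theorem~\ref{theorem:LebesgueToHomogneneousSobolev} by the one-line remark ``By the duality $\left(\dot W^p_s\right)'=\dot W^{p'}_{-s}$ we obtain,'' and your argument simply fills in the details (index computation, multiplication formula $\int\hat f\,g=\int f\,\hat g$, and density of Schwartz representatives). Your closing caveat about reconciling the $\SC_0/\SC'_0$ pairing underlying homogeneous duality with the naive integration pairing is a genuine point the paper leaves implicit, but it is a routine verification and does not affect the correctness of the argument.
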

A related observation is that Theorem~\ref{theorem:LebesgueToHomogneneousSobolev} does not extend to $p<2$. Indeed, if $\FC\colon L^p\to \dot W^{p}_{d(2/p-1)}$ was bounded for some $p\in (1,2)$, then $\FC\colon \dot W^{p'}_{d(2/p'-1)}\to L^{p'}$ would also be bounded, again by duality. Since all the function spaces in question are invariant under the reflection operator $\RC=\FC^2$, we would have that $\FC^{-1}=\FC^3\colon \dot W^{p'}_{d(2/p'-1)}\to L^{p'}$. By Theorem~\ref{theorem:LebesgueToHomogneneousSobolev}, this map would be invertible, and $\FC\colon L^{p'}\to \dot W^{p'}_{d(2/p'-1)}$ would be bounded below. But this is not true. Consider for example the (essentially) $L^{p'}$-normalized indicator function $R^{(d-1)/p'}\1_{A_R}$ of the annulus $A_R=\left\{x\in\RB^d;  R<|x|<R+1\right\}$. It is easy to show that $\|\1_{A_R}\|_{L^{p'}}\approx 1$, while $\|\hat \1_{A_R}\|_{\dot W^{p'}_{d(2/p'-1)}}\to 0$ as $R\to\infty$.

Using the formalism of homogeneous Triebel--Lizorkin-spaces, we note that $\dot W^{p}_s=\dot F^{p,2}_s\subset \dot F^{p,p}_{s}$ for $p>2$, since the spaces $\dot F^{p,q}_s$ increase with $q$. A more general statement is that $\dot W^{p}_s\subset \dot F^{q,q}_{s_q}$, provided that $q\ge p>2$, and $s-d/p=s_q-d/q$. This follows from a standard embedding result, stated below as Theorem~\ref{theorem:TriebelLizorkinEmbeddingTheorem}. Theorem~\ref{theorem:LebesgueToHomogneneousSobolev} implies that $\FC^{-1}=\FC^3\colon L^p\to\dot F^{q,q}_{d(1/q-1/p')}$. Theorem~\ref{theorem:LaplaceBergmanEmbedding} is now a consequence of the relation between $\FC^{-1}$ and $\LC$, and the fact that the analytic part of $\dot F^{q,q}_{s}(\RB)$ is contained in $A^{q}_{-sq-1}(\CB_+)$ when $s<0$.

We briefly compare Theorem~\ref{theorem:LebesgueToNonHomogneneousSobolev} with a result by Hörmander \cite[Theorem~7.9.3]{Hormander1990:TheAnalysisOfLinearPartialDifferentialOperatorsI}: If $p>2$ and $s<d(1/p-1/2)$, then $\FC\colon L^p\to W^2_s$. By Theorem~\ref{theorem:TriebelLizorkinEmbeddingTheorem}, this implies that $\FC\colon L^p\to W^p_s$ whenever $s<d(2/p-1)$. In relation to this, we point out that the target space in Theorem~\ref{theorem:LebesgueToHomogneneousSobolev} is optimal within the scale of homogeneous Sobolev spaces, and at least close to optimal in terms of Triebel--Lizorkin spaces. The next result is a precise formulation of this statement.
\begin{theorem}\label{theorem:OptimalityOfResults}
	Let $2<p<\infty$, $1< r,q<\infty$, and $s\in\RB$. If $\FC\colon L^p\to \dot F^{r,q}_s$ is bounded, then $s=d(1/p-1/r')$. Moreover, it holds that $r\ge p$, and if $r>p$, then $\dot W^p_{d(2/p-1)}\subsetneq \dot F^{r,q}_s$. In particular, if $\FC\colon L^p\to \dot W^r_s$ is bounded, then $\dot W^p_{d(2/p-1)}\subseteq\dot W^r_s$, with equality if and only if $r=p$ and $s=d(2/p-1)$.
\end{theorem}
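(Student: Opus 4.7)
The plan splits into three steps: determining $s$, establishing the inequality $r \ge p$, and deducing the embedding; the Sobolev corollary then follows immediately.

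Step 1 is a dilation argument. For a non-trivial $f \in \SC(\RB^d)$ and $\lambda > 0$, put $f_\lambda(x) := f(\lambda x)$. Then $\|f_\lambda\|_{L^p} = \lambda^{-d/p}\|f\|_{L^p}$ and, by the homogeneity of the Triebel--Lizorkin seminorm, $\|\widehat{f_\lambda}\|_{\dot F^{r,q}_s} = \lambda^{d/r - d - s}\|\widehat f\|_{\dot F^{r,q}_s}$. Demanding that the boundedness hypothesis persist as $\lambda \to 0$ and as $\lambda \to \infty$ pins down the relation $s = d(1/p + 1/r - 1) = d(1/p - 1/r')$.

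Step 2, which I expect to be the main obstacle, is the bound $r \ge p$. The plan is to use a test family on which the $L^p$- and $\dot F^{r,q}_s$-norms scale differently. Fix $\phi \in C_c^\infty(\RB^d)$ with $\phi \not\equiv 0$ supported in the annulus $\{1 < |x| < 2\}$ and set $f_k := 2^{kd/p}\phi(2^k \cdot)$ for $k = 0, \ldots, N-1$. Each $\|f_k\|_{L^p} = \|\phi\|_{L^p}$, and since the supports lie in pairwise disjoint dyadic annuli, $f := \sum_{k=0}^{N-1} f_k$ satisfies $\|f\|_{L^p} = N^{1/p}\|\phi\|_{L^p}$. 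Because $f_k$ is localised to $\{|x| \asymp 2^{-k}\}$, the Littlewood--Paley projector satisfies $\Delta_{-k}\widehat f \asymp \widehat{f_k}$ (only the $\widehat{f_j}$ with $|j - k| \le 1$ survive the projector, by Fourier-support considerations). Substituting the scaling identity $s + d/p' = d/r$ from Step 1 into $\widehat{f_k}(\xi) = 2^{-kd/p'}\widehat\phi(\xi/2^k)$, a direct computation shows that the integrand $\bigl(\sum_j 2^{jsq}|\Delta_j \widehat f|^q\bigr)^{r/q}$ at $|\xi| \asymp 2^m$ is dominated by the index $j = -m$ and evaluates to $\asymp 2^{-md}$; integration against $d\xi$ over $|\xi| \asymp 2^m$ produces an $m$-independent contribution, and summing over $m = 0, \ldots, N-1$ yields $\|\widehat f\|_{\dot F^{r,q}_s}^r \asymp N\|\widehat\phi\|_{L^r}^r$. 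The Schwartz decay of $\widehat\phi$ confines the $\ell^q$-sum to a single dominant index at each $\xi$, which is why the estimate is insensitive to the value of $q$. The hypothesis then forces $N^{1/r} \lesssim N^{1/p}$ for every $N$, hence $r \ge p$.

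Step 3 collects the embedding consequences. Writing $s_\ast := d(2/p-1)$, one checks $s_\ast - d/p = s - d/r = -d/p'$, which places $\dot W^p_{s_\ast} = \dot F^{p,2}_{s_\ast}$ and $\dot F^{r,q}_s$ on the same Sobolev line; the Jawerth--Franke form of Theorem~\ref{theorem:TriebelLizorkinEmbeddingTheorem} therefore gives $\dot F^{p,2}_{s_\ast} \hookrightarrow \dot F^{r,q}_s$ whenever $r \ge p$, with strictness for $r > p$ furnished in the classical way (a bandlimited function in $L^r \setminus L^p$ lies in $\dot F^{r,q}_s \setminus \dot F^{p,2}_{s_\ast}$). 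The final Sobolev statement is the specialisation $q = 2$, using $\dot W^r_s = \dot F^{r,2}_s$: if $r = p$, Step 1 forces $s = s_\ast$ and the spaces coincide, while $r > p$ supplies the strict containment. The central difficulty remains the block computation in Step 2, where the scale-matching $s + d/p' = d/r$ engineered by Step 1 is precisely what makes each of the $N$ dyadic blocks contribute equally to $\|\widehat f\|_{\dot F^{r,q}_s}^r$.
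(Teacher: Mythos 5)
Your Step 1 and Step 3 are sound. The dilation argument in Step 1 is cleaner than the paper's (which tests with the Littlewood--Paley pieces $\varphi_n$ directly and uses the single term $j=n$ in the $\ell^q$-sum to get the lower bound $\|\varphi_n\|_{\dot F^{r,q}_s}\gtrsim 2^{n(s+d/r')}$), though you should note that the dilation argument needs one test function $f$ with $0<\|\hat f\|_{\dot F^{r,q}_s}<\infty$, which is cheapest to arrange by taking $\hat f\in\SC_0$. Step 3 matches the paper.

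The gap is in Step 2, specifically in the clause ``the Littlewood--Paley projector satisfies $\Delta_{-k}\widehat f\asymp\widehat{f_k}$'' and the assertion ``a direct computation shows that the integrand \ldots at $|\xi|\asymp 2^m$ is dominated by $j=-m$ and evaluates to $\asymp 2^{-md}$.'' For the contradiction you need a \emph{lower} bound $\|\widehat f\|_{\dot F^{r,q}_s}^r\gtrsim N$, and here the identity you actually have is
\[
\Delta_{-m}\widehat f=\FC^{-1}\bigl(\hat\varphi_{-m}(\eta)\,f(-\eta)\bigr),
\]
where $\hat\varphi_{-m}$ is \emph{not} identically $1$ on $\supp f_m$ (in the standard construction $\hat\varphi_{-m}$ vanishes on part of that annulus), and where the neighbouring shells $f_{m\pm1}$ also contribute. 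So $\Delta_{-m}\widehat f$ is a genuinely modified version of $\widehat{f_m}$, and a pointwise $\asymp$ does not follow from Fourier-support considerations alone; one cannot simply read off $|\Delta_{-m}\widehat f(\xi)|\gtrsim 2^{-md/p'}$ at $|\xi|\asymp 2^m$. The estimate can be rescued—rescaling shows $\Delta_{-m}\widehat f(\xi)=2^{-md/p'}\Psi(2^{-m}\xi)$ where $\Psi=\FC^{-1}\bigl(\hat\varphi_0\cdot(\phi(-\cdot)+2^{-d/p}\phi(-\cdot/2))\bigr)$ is a fixed Schwartz function, and since $\Psi$ is the Fourier transform of a compactly supported function it is real-analytic, hence nonzero on a subset of $\{|\zeta|\asymp1\}$ of positive measure—but this is exactly the missing argument, and it is not mentioned. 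In contrast, the paper evades the issue entirely by reducing to $q=r$ via the embedding $\dot F^{r,q}_s\subset\dot F^{\tilde r,\tilde r}_{\tilde s}$ and then choosing the test function $f=\sum_n\alpha_n2^{-2nd/p}\varphi_{2n}$: because $\hat\varphi_{2n}$ have pairwise disjoint supports, $\varphi_{2k}\ast f=\alpha_k2^{-2kd/p}\varphi_{2k}\ast\varphi_{2k}$ holds \emph{exactly}, and the Besov-type sum $\sum_j2^{jsr}\|\varphi_j\ast f\|_{L^r}^r$ is computed with no tail or smearing issues, yielding $\ell^p\subset\ell^r$ directly. So your route can be made to work, but the lower-bound step is the mathematical content of the argument and is currently a gap.
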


Having discussed $\FC L^p$ for $p>2$, it seems natural to add an observation about $p<2$: The typical proof of the Hausdorff--Young inequality uses complex (Riesz--Thorin) interpolation between $\FC\colon L^2\to L^2$ and $\FC\colon L^1\to L^\infty$. This argument completely disregards the fact that if $f\in L^1$, then $\hat f$ is not only bounded but also continuous. However, it seems reasonable to expect that if $f\in L^p$, $1<p<2$, then $\hat f$ should be more regular than an arbitrary $L^{p'}$-function. A striking manifestation of this is a result by Tomas \cite{Tomas1975:ARestrictionTheoremForTheFourierTransform}, stating that for any fixed $p$ with $1\le p<2(d+1)/(d+3)$,
\begin{align*}
\int_{|\xi|=1}|\hat f(\xi)|^2\,d \sigma(\xi)\lesssim \int_{\RB^d}|f(x)|^p\,d  x,\quad f\in L^p.
\end{align*}
Here $\,d \sigma$ signifies integration with respect to $(d-1)$-dimensional surface measure. We refer to \cite{Stein1993:HarmonicAnalysis:Real-VariableMethodsOrthogonalityAndOscillatoryIntegrals} for a background on Fourier restriction theorems, and to \cite{Bourgain-Guth2011:BoundsOnOscillatoryIntegralOperatorsBasedOnMultilinearEstimates} for a more recent development.

The proof of Tomas' result is based on a dyadic decomposition of frequencies, and averaging the Hausdorff--Young inequality over different frequency scales. Similar arguments appear also in Hörmander's treatment of the (closely related) Bochner--Riesz problem \cite{Hormander1973:OscillatoryIntegralsAndMultipliersOnFLp}. However, the following result does not appear to be recorded.
\begin{theorem}\label{theorem:LebesgueToBesov}
	If $1\le p\le 2$, then $\FC\colon L^p\to \dot B^{p',p}_0\cap B^{p',p}_0$ is bounded.
\end{theorem}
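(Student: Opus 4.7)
The plan is to combine a Littlewood--Paley decomposition in frequency with the Hausdorff--Young inequality, exploiting the fact that $\FC^2$ is (up to reflection) the identity, so a frequency localization of $\hat f$ is automatically the Fourier transform of a \emph{spatial} localization of $f$.

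First I would fix a Littlewood--Paley partition of unity $\{\chi_j\}_{j\in\ZB}$ on $\RB^d\setminus\{0\}$, with $\chi_j$ a smooth bump supported in an annulus $\{\xi:|\xi|\sim 2^j\}$, and denote by $P_j$ the multiplier with symbol $\chi_j$. For the inhomogeneous case I would in addition fix a compactly supported bump $\chi_{\le 0}$ supported in $\{|\xi|\lesssim 1\}$ and let $P_{\le 0}$ be the corresponding low-frequency projector. By definition, the relevant Besov norms satisfy
\[
\|\hat f\|_{\dot B^{p',p}_0}^p\sim\sum_{j\in\ZB}\|P_j\hat f\|_{p'}^p,\qquad\|\hat f\|_{B^{p',p}_0}^p\sim\|P_{\le 0}\hat f\|_{p'}^p+\sum_{j\ge 1}\|P_j\hat f\|_{p'}^p.
\]

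The key identity is that, writing $\tilde\chi_j(x):=\chi_j(-x)$ (still supported in $\{|x|\sim 2^j\}$), the relation $\widehat{\hat g}=g(-\cdot)$ yields
\[
P_j\hat f=\FC^{-1}\bigl(\chi_j\,\widehat{\hat f}\bigr)=\FC\bigl(\tilde\chi_j\,f\bigr),
\]
i.e.\ a frequency-localized piece of $\hat f$ is the Fourier transform of a spatially localized piece of $f$. Since $1\le p\le 2$, the Hausdorff--Young inequality gives
\[
\|P_j\hat f\|_{p'}\le\|\tilde\chi_j f\|_p\lesssim\Bigl(\int_{|x|\sim 2^j}|f(x)|^p\,d x\Bigr)^{1/p}.
\]

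Raising to the $p$-th power and summing over $j\in\ZB$, the bounded overlap of the annuli $\{|x|\sim 2^j\}$ yields
\[
\sum_{j\in\ZB}\|P_j\hat f\|_{p'}^p\lesssim\sum_{j\in\ZB}\int_{|x|\sim 2^j}|f(x)|^p\,d x\lesssim\|f\|_p^p,
\]
which controls the homogeneous norm. The same argument applied with $\tilde\chi_{\le 0}$ (supported in $\{|x|\lesssim 1\}$) produces $\|P_{\le 0}\hat f\|_{p'}\lesssim\|f\|_p$, completing the inhomogeneous norm.

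I do not anticipate a serious obstacle: the argument is essentially a dualization of the observation used in Tomas's theorem, with the Hausdorff--Young inequality replacing Plancherel. The only matters requiring care are the book-keeping of the Littlewood--Paley conventions (ensuring that the spatial cutoffs $\tilde\chi_j$ one ends up with are bounded and compactly supported so that multiplication by them is contractive on $L^p$), and checking the endpoint $p=1$, where $p'=\infty$ and the Besov sum becomes $\sum_j\|P_j\hat f\|_\infty\lesssim\sum_j\|\tilde\chi_j f\|_1\lesssim\|f\|_1$, which goes through without change.
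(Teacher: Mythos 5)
Your proposal is correct and follows essentially the same route as the paper's own proof: apply the Hausdorff--Young inequality to each Littlewood--Paley block of $\hat f$ and then sum using the bounded overlap of the (frequency, or after the $\FC^2$ reflection, spatial) annuli. The paper phrases this by directly computing $\|\varphi_k\ast\check g\|_{p'}\le\|\hat\varphi_k g\|_p$ and summing via $\sum_k|\hat\varphi_k|^p\le 1$, rather than via the identity $P_j\hat f=\FC(\tilde\chi_j f)$, but the computations are identical in content.
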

In the above theorem, $\dot B^{p',p}_0$ and $B^{p',p}_0$ respectively denote homogeneous and non-homogeneous Besov-spaces. Theorem~\ref{theorem:LebesgueToBesov} is significantly stronger than the Hausdorff--Young theorem. Consider for example the embeddings
\[
B^{p',p}_0\subsetneq F^{p',p}_0\subsetneq F^{p',2}_0=L^{p'},
\] 
valid for $1<p<2$, e.g. \cite[Proposition~2.3.2.2]{Triebel1983:TheoryOfFunctionSpacesI}. The inclusions are strict by \cite[Theorem~2.3.9]{Triebel1983:TheoryOfFunctionSpacesI}.

A way to think about Theorem~\ref{theorem:LebesgueToBesov} is as follows: It is known that if $\MC(\RB_+)$ denotes the space of finite complex measures on $\RB_+$, then $\LC \MC(\RB_+) \subset B^{\infty,1}_0(\RB)$, e.g. \cite[p.~257]{Vitse2005:ABesovClassFunctionalCalculusForBoundedHolomorphicSemigroups}. The case $p=1$ of Theorem~\ref{theorem:LebesgueToBesov} is but a simple variation of this result, while the case $p=2$ is the Plancherel theorem. Once again, the intermediate cases can be obtained by complex interpolation, e.g. \cite[Theorem~6.4.5]{Bergh-Lofstrom1976:InterpolationSpacesAnIntroduction}. Since $B^{\infty,1}_0$ is a space of continuous functions, the interpolation argument now reflects the fact that $\FC L^1$ consists of continuous functions. This may explain why arguments similar to the proof of Theorem~\ref{theorem:LebesgueToBesov} also appear in the literature on restriction theorems.

A key tool for us is the method of complex interpolation. The basic idea is that if $T\colon L^{p_0}+L^{p_1}\to L^{q_0}+L^{q_1}$ is a linear map, and $T\colon L^{p_j}\to L^{q_j}$ is bounded for $j\in\{0,1\}$, then $T\colon L^p\to L^q$ is bounded whenever $\left(1/p,1/q\right)$ belongs to the straight line segment connecting the points $\left(1/p_0,1/q_0\right)$ and $\left(1/p_1,1/q_1\right)$ in $\RB^2$. Now note that if $p=q=1$, then \eqref{eq:CarlesonCondition} just means that $\mu$ is a finite measure, while for any $f\in L^1(\RB_+)$, $\LC f$ is a bounded function on $\CB_+$. Hence, \eqref{eq:CarlesonCondition} implies that $\LC\colon L^1(\RB_+)\to L^1(\CB_+,d \mu)$. Based on Figure~\ref{figure:KnownAndNewResults}, it seems difficult not to imagine the existence of an interpolation result which allows for the hypothesis of sectoriality to be relaxed also in case (III). We do not resolve this problem, but we do note by means of an example that Stein--Weiss interpolation, in the sense of Theorem~\ref{theorem:Stein--Weiss} below, applied in a simple but quite general way, is not sufficient for this purpose.

The remainder of this paper is organized as follows: In Section~\ref{sec:PreliminariesAndNotatiion}, we recall some basic results and standard notation. We prove the Theorems~\ref{theorem:HardyLittlewoodInequalityNonRadial} through \ref{theorem:LebesgueToBesov} in Section~\ref{sec:FourierResults}, and apply these to the Laplace transform, in order to prove the Theorems~\ref{theorem:LaplaceCarlesonEmbeddingTheorem} through \ref{theorem:WeightedLaplaceCarlesonEmbedding}, in Section~\ref{sec:LaplaceResults}. In Section~\ref{sec:NonResult}, we give an example related to the above case (III) for non-sectorial measures.

\section{Preliminaries and notation}\label{sec:PreliminariesAndNotatiion}

Given two parametrized sets of non-negative numbers $\{A_i\}_{i\in I}$ and $\{B_i\}_{i\in I}$, we write $A_i\lesssim B_i$, $i\in I$, to indicate the existence of a constant $C>0$ such that $i\in I\implies A_i\le CB_i$. The index set $I$ is often implicit from context, in which case we allow ourselves to suppress it in our notation. If $A_i\lesssim B_i$ and $B_i\lesssim A_i$, then we write $A_i\approx B_i$.

Given an analytic function $F\colon \CB_+\to \CB$, and $y>0$, define $F_y\colon \RB \to \CB$ by $F_y(x)=F(x+iy)$. The Hardy space $H^p(\CB_+)$ is the space of analytic function $F\colon \CB_+\to \CB$ such that
\begin{align*}
\|F\|_{H^p(\CB_+)}:=\sup_{y>0}\|F_y\|_{L^p(\RB)}<\infty.
\end{align*}
If $F\in H^p(\CB_+)$, then the limit $bF(x)=\lim_{y\to 0^+}F_y(x)$ exists for Lebesgue a.e. $x\in\RB$. Moreover, $F_y\to bF$ in $L^p(\RB)$, and we may recover $F$ from $bF$ via the Poisson extension operator;
\begin{align*}
F(x+iy)=\left(P_y\ast bF\right)(x) :=
\frac{1}{\pi}\int_{t\in\RB}\frac{y}{(x-t)^2+y^2}bF(t)\,d  t.
\end{align*}
The correspondence between $F$ and $bF$ characterizes $H^p(\CB_+)$ as the subspace of $L^p(\RB)$ consisting of functions whose Poisson extensions to $\CB_+$ are analytic. We refer to \cite[Chapter~II, Section~3]{Garnett2007:BoundedAnalyticFunctions}.

In the introduction, we needed the following result on Hardy spaces:
\begin{theorem}\label{theorem:CarlesonDuren}
	Let $1<p\le q<\infty$, and $\mu$ be a positive Borel measure on $\CB_+$. Then $H^p(\CB_+)\subset L^q(\CB_+,d \mu)$ in the sense of a continuous embedding if and only if
	\begin{align*}
	\mu(Q_I)\lesssim|I|^{q/p}\quad\textnormal{for all intervals}\quad I\subset\RB.
	\end{align*}
\end{theorem}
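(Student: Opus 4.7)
The plan is to prove the two directions separately. For necessity, I would probe the embedding with an $H^p$-normalized test function concentrated on each Carleson box. Given an interval $I \subset \RB$ centered at $x_I$, pick an analytic branch of the power function and set
$$f_I(z) = \frac{|I|^{1/p}}{(z - x_I + i|I|/2)^{2/p}}, \qquad z \in \CB_+.$$
A direct computation of $\int_\RB |f_I(x+iy)|^p\,dx$ (partial fractions / arctangent) shows $\|f_I\|_{H^p(\CB_+)} \approx 1$ uniformly in $I$, while for $z \in Q_I$ the triangle inequality gives $|z - x_I + i|I|/2| \le 2|I|$, so $|f_I(z)|^q \gtrsim |I|^{-q/p}$ uniformly on $Q_I$. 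Continuity of the embedding $H^p \hookrightarrow L^q(\mu)$ then forces $\mu(Q_I) \lesssim |I|^{q/p}$.

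For sufficiency, the driving tool is the nontangential maximal function $NF(x) = \sup_{(t,y)\in\Gamma(x)}|F(t+iy)|$, where $\Gamma(x) \subset \CB_+$ is a cone of fixed aperture with vertex at $x$. Two inputs are used: the standard bound $\|NF\|_{L^p(\RB)} \lesssim \|F\|_{H^p(\CB_+)}$, and the observation that the superlevel set $\{z \in \CB_+ : |F(z)| > \lambda\}$ lies inside the tent over the open set $E_\lambda := \{x \in \RB : NF(x) > \lambda\}$. The core geometric step, which I view as the principal obstacle, is a Whitney-type covering argument: decomposing $E_\lambda = \bigcup_k I_k$ into essentially disjoint dyadic intervals whose lengths are comparable to their distance to $\RB \setminus E_\lambda$, and verifying that the tent over $E_\lambda$ sits inside $\bigcup_k Q_{cI_k}$ for a dilation constant $c$ determined by the aperture of $\Gamma$. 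Combined with the Carleson assumption and the elementary inequality $\sum_k a_k^r \le (\sum_k a_k)^r$, valid for $r = q/p \ge 1$ and $a_k \ge 0$, this yields
$$\mu\bigl(\{z \in \CB_+ : |F(z)|>\lambda\}\bigr) \le \sum_k \mu(Q_{cI_k}) \lesssim \sum_k |I_k|^{q/p} \le |E_\lambda|^{q/p}.$$

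To finish, combine the layer-cake formula with the above estimate and the Chebyshev bound $|E_\lambda|^{q/p-1} \le \lambda^{-(q-p)}\|NF\|_{L^p}^{q-p}$:
\begin{align*}
\int_{\CB_+}|F|^q\,d\mu &= q\int_0^\infty \lambda^{q-1}\mu\bigl(\{|F|>\lambda\}\bigr)\,d\lambda\\
&\lesssim \|NF\|_{L^p}^{q-p}\cdot q\int_0^\infty \lambda^{p-1}|E_\lambda|\,d\lambda\\
&= \tfrac{q}{p}\|NF\|_{L^p}^{q}\lesssim \|F\|_{H^p(\CB_+)}^q.
\end{align*}
Everything apart from the tent-covering step is either a routine computation or a standard maximal-function fact; it is the geometric argument, where the structure of $\CB_+$ interacts with the hypothesis $q \ge p$, that deserves the most care.
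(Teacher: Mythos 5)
Your proof is correct, and the strategy (test functions for necessity; nontangential maximal function, tents, Whitney covering, and a good-$\lambda$ style layer-cake for sufficiency) is the now-standard route to this result. Note that the paper itself does not prove Theorem~\ref{theorem:CarlesonDuren}; it simply cites Carleson (1962) for $p=q$ and Duren (1969) for the general case $p\le q$, so there is nothing in the text to compare your argument against. Your write-up is essentially a modern rendering of the Carleson--Duren argument.

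Two small remarks. First, in one dimension the Whitney-type covering you flag as the principal obstacle can be dispensed with: since $E_\lambda$ is an open subset of $\RB$ of finite measure (by Chebyshev and $NF\in L^p$), it is a countable disjoint union of open intervals $J_j$, and any $z=x+iy$ in the aperture-$a$ tent over $E_\lambda$ has its entire shadow $(x-ay,x+ay)$ inside a single component $J_j$; hence $x\in J_j$ and $y\le |J_j|/(2a)$, giving $z\in Q_{cJ_j}$ directly with $c=\max(1,1/(2a))$. The Whitney decomposition is only needed if one wants an argument that transfers to higher-dimensional half-spaces. Second, for the necessity direction one should note that $f_I$ genuinely lies in $H^p(\CB_+)$ because $\Re(2/p)>0$ makes the power function bounded and analytic in a half-plane shifted slightly below $\RB$; the explicit computation $\int_\RB |f_I(x+iy)|^p\,dx = \pi|I|/(y+|I|/2)$ then shows the $H^p$-norm is attained in the limit $y\to 0^+$ and equals $(2\pi)^{1/p}$, independent of $I$, confirming the normalization.
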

In the case $p=q$, this is the celebrated Carleson embedding theorem \cite{Carleson1962:InterpolationsByBoundedAnalyticFunctionsAndTheCoronaProblem}, while the general case is due to Duren \cite{Duren1969:ExtensionOfATheoremOfCarleson}. 

Given $\alpha>-1$, the standard weighted Bergman space $A^p_\alpha (\CB_+)$ is the space of analytic function $F\colon \CB_+\to \CB$ such that
\begin{align*}
\|F\|_{A^p_\alpha(\CB_+)}^p:=\int_{y=0}^\infty \int_{x\in\RB}|F(x+iy)|^py^\alpha \,d  x\,d  y<\infty.
\end{align*}
A Bergman space analogue of Theorem~\ref{theorem:CarlesonDuren} is easily derived from \cite[Theorem~2.1]{Jacob-Partington-Pott2013:OnLaplace-CarlesonEmbeddingTheorems}, or by the method outlined in \cite{Luecking1983:ATechniqueForCharacterizingCarlesonMeasuresOnBergmanSpaces}:
\begin{theorem}\label{theorem:CarlesonBergman}
	Let $1<p<\infty$, $\alpha>-1$, and $\mu$ be a positive Borel measure on $\CB_+$. Then $A^p_\alpha(\CB_+)\subset L^p(\CB_+,d \mu)$ in the sense of a continuous embedding if and only if
	\begin{align*}
	\mu(Q_I)\lesssim|I|^{2+\alpha}\quad\textnormal{for all intervals}\quad I\subset\RB.
	\end{align*}
\end{theorem}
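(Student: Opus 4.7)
My plan is to prove the two directions separately, following the standard scheme for Carleson embeddings into Bergman spaces.

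For necessity, given an interval $I\subset\RB$ centered at $x_I$, I would place $w=x_I+i|I|$ at the ``tip'' of $Q_I$ and test the embedding against $F_w(z)=(z-\bar w)^{-\gamma}$ with $\gamma>(2+\alpha)/p$. A scaling argument reducing to $w=i$ (via $z\mapsto(z-x_I)/|I|$) gives $\|F_w\|_{A^p_\alpha}^p\approx|I|^{2+\alpha-\gamma p}$. On the other hand, $|z-\bar w|\lesssim|I|$ for all $z\in Q_I$, so $|F_w|^p\gtrsim|I|^{-\gamma p}$ throughout $Q_I$. Combining these with the assumed continuous embedding yields $\mu(Q_I)\lesssim|I|^{2+\alpha}$.

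For sufficiency I would adapt Luecking's technique. Partition $\CB_+$ into the Whitney squares $R_{j,k}:=[k2^{-j},(k+1)2^{-j}]\times[2^{-j},2^{-j+1})$ for $j,k\in\ZB$, and let $\tilde R_{j,k}$ denote a fixed mild dilation, chosen so that a disk of radius $\asymp 2^{-j}$ about any point of $R_{j,k}$ lies inside $\tilde R_{j,k}$. Subharmonicity of $|F|^p$ gives the mean value estimate $\sup_{R_{j,k}}|F|^p\lesssim 2^{2j}\int_{\tilde R_{j,k}}|F|^p\,dA$. The hypothesis on $\mu$ together with $R_{j,k}\subset Q_{I_{j,k}}$, where $I_{j,k}$ is the base interval of $R_{j,k}$, yields $\mu(R_{j,k})\lesssim 2^{-j(2+\alpha)}$. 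Since $y\approx 2^{-j}$ throughout $\tilde R_{j,k}$, the three estimates combine to
\[
\int_{R_{j,k}}|F|^p\,d\mu\lesssim 2^{-j(2+\alpha)}\cdot 2^{2j}\cdot 2^{j\alpha}\int_{\tilde R_{j,k}}|F(w)|^p y^\alpha\,dA(w)=\int_{\tilde R_{j,k}}|F(w)|^p y^\alpha\,dA(w),
\]
and summing over $(j,k)$, using bounded overlap of $\{\tilde R_{j,k}\}$, yields $\|F\|_{L^p(\CB_+,d\mu)}\lesssim\|F\|_{A^p_\alpha}$.

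The main obstacle I expect is not the analysis but the geometric bookkeeping in the dilation step: the dilation factor defining $\tilde R_{j,k}$ must be large enough for the mean value inequality to apply uniformly in $z\in R_{j,k}$, yet small enough that $\{\tilde R_{j,k}\}$ retains bounded overlap, so the final summation does not accumulate extra factors. Once this is calibrated, the remaining estimates are routine, and as indicated in the excerpt the same conclusion can alternatively be extracted from \cite[Theorem~2.1]{Jacob-Partington-Pott2013:OnLaplace-CarlesonEmbeddingTheorems}.
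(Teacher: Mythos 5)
The paper itself does not write out a proof of this theorem; it simply notes that it can be derived from \cite[Theorem~2.1]{Jacob-Partington-Pott2013:OnLaplace-CarlesonEmbeddingTheorems} or by the method of \cite{Luecking1983:ATechniqueForCharacterizingCarlesonMeasuresOnBergmanSpaces}. Your proposal fills this in by actually carrying out the Luecking argument, and the structure is correct: reproducing kernel type test functions for necessity, and the Whitney--subharmonicity--bounded overlap scheme for sufficiency. The exponent bookkeeping in the displayed estimate is exactly right (the factors $2^{-j(2+\alpha)}\cdot 2^{2j}\cdot 2^{j\alpha}$ cancel), and the necessity argument with $F_w(z)=(z-\bar w)^{-\gamma}$, $\gamma>(2+\alpha)/p$, works: one needs $\gamma p > 2+\alpha$ for convergence at infinity and $\alpha>-1$ at $y=0$, both of which you have.

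One small inaccuracy to flag: with $R_{j,k}=[k2^{-j},(k+1)2^{-j}]\times[2^{-j},2^{-j+1})$ and $I_{j,k}=[k2^{-j},(k+1)2^{-j}]$, the inclusion $R_{j,k}\subset Q_{I_{j,k}}$ is false, since $Q_{I_{j,k}}$ only reaches height $|I_{j,k}|=2^{-j}$ while $R_{j,k}$ lives in $[2^{-j},2^{-j+1})$. You need to enlarge the interval, e.g.\ take $J_{j,k}$ with the same centre and $|J_{j,k}|=2^{-j+1}$, so that $R_{j,k}\subset Q_{J_{j,k}}$ and $\mu(R_{j,k})\le\mu(Q_{J_{j,k}})\lesssim 2^{-j(2+\alpha)}$. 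This changes nothing in the final estimate, but as written the step is not literally true. With that corrected, the proof is complete and matches the route the paper points to.
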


We let $\SC$ denote the Schwartz class of functions on $\RB^d$, and $\SC'$ its topological dual. The Fourier transform $\FC\colon f\mapsto \hat f$, $f\in\SC$, is defined according to the convention
\begin{align*}
\hat f(\xi)=\int_{\RB^d}f(x)e^{-2\pi i x\cdot \xi}\,d  x,\quad \xi\in\RB^d,
\end{align*}
and extended to $\SC'$ by the relation $\langle \hat f,g\rangle = \langle f,\hat g\rangle$. 

We note that $\LC f(x+iy)=\FC^{-1}(e^{-2\pi y \cdot}f)(x)$. In particular, if $\Phi\in\SC(\RB)$ satisfies $\hat\Phi(\xi)=e^{-2\pi \xi}$ for $\xi\ge 0$, and $\Phi_y$ denotes the $L^1(\RB)$-normalized dilation $x\mapsto \frac{1}{y}\Phi(\frac{x}{y})$, then $\LC f(x+iy)=\left(\Phi_y\ast \check f\right)(x)$, where $\check f=\FC^{-1} f$. If $f\in L^p(\RB_+)$, $1\le p\le 2$, so that $\check f\in L^{p'}(\RB)$, then we may replace $\Phi$ with the Poisson kernel $P\colon x\mapsto \frac{1}{\pi}\frac{1}{1+x^2}$, since $P\in L^1(\RB)$ and $\hat P (\xi)=e^{-2\pi |\xi|}$. Consequently, $\LC\colon L^p(\RB_+)\to H^{p'}(\CB_+)$ is bounded by the Hausdorff--Young theorem.

The subspace $\SC_0\subset\SC$ is defined by the condition that $\int f(x)x^\alpha\,d  x=0$ for all multi-indices $\alpha$, or equivalently that any derivative of $\hat f$ vanishes at the origin. Its dual coincides with $\SC'/\PC$, where $\PC$ denotes the space of polynomials. For a discussion on $\SC_0$ and its dual, we refer to \cite[Chapter~5]{Triebel1983:TheoryOfFunctionSpacesI}. Said monograph is also a standard reference for the following material on Besov- and Triebel--Lizorkin-spaces.

The Bessel potential $I_\alpha\colon f\mapsto \FC^{-1}\left(\left(1+|\cdot|^2\right)^{\alpha/2}\hat f\right)$ is a homeomorphism on $\SC'$, whenever $\alpha\in\RB$. Similarly, the Riesz potential $\dot I_\alpha\colon  f \mapsto \FC^{-1}\left(|\cdot|^\alpha \hat f\right)$ is a homeomorphism on $\SC'/\PC$.

Let $\varphi\in\SC$. Assume that $\hat\varphi$ is radially decreasing, $\hat \varphi(\xi) = 1$ for $|\xi|\le 1$, and $\hat \varphi(\xi)=0$ for $|\xi|\ge 2$. Define a sequence $(\varphi_k)_{k\in\ZB}$, by $\hat \varphi_0(\xi)=\hat \varphi(\xi/2)-\hat \varphi(\xi)$, and $\hat \varphi_{k}(\xi)=\hat \varphi_0(2^{-k}\xi)$ for $k\ne 0$. It then holds that $\hat \varphi+\sum_{k=0}^\infty\hat\varphi_k\equiv 1$ on $\RB^d$, and $\sum_{k=-\infty}^\infty \hat \varphi_k\equiv 1$ on $\RB^d\setminus\{0\}$. For $1< p,q<\infty$, $s\in\RB$, and $f\in\SC'$, let
\begin{align*}
\|f\|_{F^{p,q}_s}=\left\|\left(\left|\varphi\ast f\right|^q+\sum_{k=0}^\infty \left|2^{ks}(\varphi_k\ast f)\right|^q\right)^{1/q}\right\|_{L^p}.
\end{align*}
It can be shown that $\|f\|_{F^{p,q}_s}$ is independent of the choice of $\varphi$, in the sense of equivalent norms. Hence, we may define the non-homogeneous Triebel--Lizorkin space $F^{p,q}_s$ as
\[
F^{p,q}_s:=\left\{f\in\SC' \mid  \|f\|_{F^{p,q}_s} <\infty \right\}.
\]
This is a Banach space, and the Bessel potential acts as a shift operator on the smoothness index $s$: If $s,\alpha\in\RB$, then $I_\alpha\colon F^{p,q}_s\to F^{p,q}_{s-\alpha}$ is a bounded isomorphism of Banach spaces. In particular, $\|I_\alpha f \|_{F^{p,q}_{s-\alpha}}\approx \|f\|_{F^{p,q}_{s}}$.

Similarly, let
\begin{align*}
\|f\|_{\dot F^{p,q}_s}=\left\|\left(\sum_{k=-\infty}^\infty \left|2^{ks}(\varphi_k\ast f)\right|^q\right)^{1/q}\right\|_{L^p}.
\end{align*}
Note that $\|f\|_{\dot F^{p,q}_s}=0$ if and only if $\supp \hat f\subseteq \{0\}$, i.e. if and only if $f\in\PC$. The homogeneous Triebel--Lizorkin space $\dot F^{p,q}_s$ is defined as
\[
\dot F^{p,q}_s:=\left\{[f]\in\SC'/\PC \mid  \|f\|_{\dot F^{p,q}_s} <\infty \right\}.
\]
This is also a Banach space, with the Riesz potential acting as a shift of smoothness: If $s,\alpha\in\RB$, then $\dot I_\alpha\colon \dot F^{p,q}_s\to \dot F^{p,q}_{s-\alpha}$ is a bounded isomorphism of Banach spaces. In particular, $\|\dot I_\alpha f \|_{\dot F^{p,q}_{s-\alpha}}\approx \|f\|_{\dot F^{p,q}_{s}}$.

Let $\psi_k=\varphi_{k-1}+\varphi_{k}+\varphi_{k+1}$. Then $\hat \psi_k\equiv 1$ on the support of $\hat\varphi_{k}$.  For $f\in\SC'$, we consider the formal series
\[
f_0=\sum_{k=-\infty}^\infty \varphi_k\ast f=\sum_{k=-\infty}^\infty \psi_k\ast \varphi_k\ast f.
\]
If this series converges in $\SC'$, then we call $f_0$ the canonical representative of $[f]\in\SC'/\PC$. It is an exercise to show that $\sum_{k=0}^{\infty}\varphi_{k}\ast f$ always converges in $\SC'$. As for the other half of the series, it is trivial that
\[
\|\varphi_{k}\ast f\|_{L^p}\le 2^{-sk}\|f\|_{\dot F^{p,q}_s},\quad f\in\SC'.
\]
Hence, if $s<0$, and $f\in {\dot F^{p,q}_s}$, then the series $\sum_{k=-\infty}^{-1}\varphi_{k}\ast f$ converges in $L^p$. For $s=0$, we first use Young's inequality to obtain that
\[
\|\psi_k\ast \varphi_{k}\ast f\|_{L^r}\le \|\psi_k\|_{L^q}\|\varphi_{k}\ast f\|_{L^p}=2^{kd/q'}\|\psi_0\|_{L^q}\|\varphi_{k}\ast f\|_{L^p},
\]
whenever $\frac{1}{p}+\frac{1}{q}=1+\frac{1}{r}$. In particular, $\sum_{k=-\infty}^{-1}\varphi_{k}\ast f$ converges in $L^r$ for any $r>p$. We conclude that if $s\le 0$, then any $f\in \dot F^{p,q}_s$ has a canonical representative $f_0$. If $s<0$, then it is easy to see that $\varphi\ast f_0\in L^p$, and that $f_0\in F^{p,q}_s$. A somewhat deeper fact is the Littlewood--Paley theorem: With the above identification, $L^p=\dot F^{p,2}_0=F^{p,2}_0$. 

We define $W^p_s$, $1<p<\infty$, as the space of $f\in\SC'$, such that $I_s f\in L^p$, i.e. $W^p_s=F^{p,2}_s$. Similarly, $\dot W^p_s=\dot F^{p,2}_s$.

The definition of the Besov spaces $B^{p,q}_s$ and $\dot B^{p,q}_s$ is similar to that of $F^{p,q}_s$ and $\dot F^{p,q}_s$; we only interchange the $L^p$- and $\ell^q$-norms. In other words, the norms are given by
\begin{align*}
\|f\|_{B^{p,q}_s}=\left(\|\varphi\ast f\|_{L^p}^q+\sum_{k=0}^\infty 2^{ksq}\|\varphi_k\ast f\|_{L^p}^q\right)^{1/q},
\end{align*}
and 
\begin{align*}
\|f\|_{\dot B^{p,q}_s}=\left(\sum_{k=-\infty}^\infty 2^{ksq}\|\varphi_k\ast f\|_{L^p}^q\right)^{1/q},
\end{align*}
and the spaces $B^{p,q}_s\subset S'$ and $\dot B^{p,q}_s\subset \SC'/\PC$ are defined by imposing finiteness of the respective norm. Here we also allow for the endpoints $p,q\in\{1,\infty\}$. If $1<p<\infty$, then $F^{p,p}_s=B^{p,p}_s$, and $\dot F^{p,p}_s=\dot B^{p,p}_s$.

Since the spaces $\ell^q(\ZB)$ increase with $q$, the same is true for the spaces $F^{p,q}_s$, $\dot F^{p,q}_s$, $B^{p,q}_s$, and $\dot B^{p,q}_s$. A more sophisticated embedding result is given by \cite[Theorem~2.7.1]{Triebel1983:TheoryOfFunctionSpacesI}:
\begin{theorem}\label{theorem:TriebelLizorkinEmbeddingTheorem}
	If $1<p_0,q_0,p_1,q_1<\infty$, $s_1<s_0$, and $s_0-\frac{d}{p_0}=s_1-\frac{d}{p_1}$, then
	\begin{align*}
	F^{p_0,q_0}_{s_0}\subset F^{p_1,q_1}_{s_1}\qquad\textnormal{and}\qquad \dot F^{p_0,q_0}_{s_0}\subset \dot F^{p_1,q_1}_{s_1}.
	\end{align*}
\end{theorem}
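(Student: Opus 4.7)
The plan is to follow the standard Triebel--Lizorkin Sobolev embedding strategy: reduce smoothness via a potential lift, then control the Littlewood--Paley quasi-norm using Peetre's maximal function in conjunction with the Fefferman--Stein vector-valued maximal inequality.

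First, I would apply the Bessel potential $I_{s_1}$ (and, in the homogeneous case, the Riesz potential $\dot I_{s_1}$) to both sides of the claimed inclusion. Since these are Banach-space isomorphisms shifting smoothness by $s_1$ on the Triebel--Lizorkin scale, the problem reduces to establishing $F^{p_0, q_0}_{s} \subset F^{p_1, q_1}_{0}$ with $s := s_0 - s_1 = d(1/p_0 - 1/p_1) > 0$ (and the analogous homogeneous inclusion, where one additionally works with the canonical representative in $\SC'/\PC$ described in the preliminaries to make sense of the full dyadic sum).

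The core of the proof is a pointwise maximal estimate. For each $k$, the block $\varphi_k \ast f$ has Fourier support in an annulus of radius $\approx 2^k$, and Peetre's maximal function
\[
\varphi_k^* f(x) := \sup_{y \in \RB^d} \frac{|(\varphi_k \ast f)(y)|}{(1 + 2^k|x-y|)^N}
\]
is pointwise dominated by $[\mathcal{M}(|\varphi_k \ast f|^r)(x)]^{1/r}$ whenever $N > d/r$, where $\mathcal{M}$ is the Hardy--Littlewood maximal operator. Combining the trivial bound $|\varphi_k \ast f| \le \varphi_k^* f$ with the Nikolski-type scaling for bandlimited functions, which produces the Sobolev gain $2^{ks}$, and invoking the Fefferman--Stein vector-valued maximal inequality on $L^{p_1/r}(\ell^{q_1/r})$, one closes the estimate $\|f\|_{F^{p_1, q_1}_0} \lesssim \|f\|_{F^{p_0, q_0}_{s}}$, and unwinds the lift to obtain the theorem.

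The main obstacle is the simultaneous balance of the auxiliary parameter $r$: one needs $r$ small enough that $r < \min(p_0, q_0)$ for the Peetre/$\mathcal{M}$ bound to be compatible with the source norm after rescaling, and $r < \min(p_1, q_1)$ for the Fefferman--Stein inequality to apply (since that requires $p_1/r, q_1/r > 1$). Such a choice is possible precisely because the statement restricts all four parameters to $(1,\infty)$, and the strict inequality $s_1 < s_0$ is exactly what ensures the Nikolski gain runs in the correct direction.
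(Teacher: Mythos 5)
The paper does not supply its own proof of Theorem~\ref{theorem:TriebelLizorkinEmbeddingTheorem}; it is imported directly from \cite[Theorem~2.7.1]{Triebel1983:TheoryOfFunctionSpacesI}. So there is no ``paper's proof'' to compare against, and your sketch has to be assessed on its own.

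You have listed the right raw materials (potential lift, Nikolski/Bernstein on dyadic blocks, Peetre maximal function, Fefferman--Stein), but the sentence ``one closes the estimate'' is precisely where the depth of the theorem lives, and as written it does not close. Peetre-plus-Fefferman--Stein applied on the target side only reproduces the maximal characterization of $F^{p_1,q_1}_0$; applied on the source side it only reproduces that of $F^{p_0,q_0}_s$; neither step passes between two $L^p(\ell^q)$ quasi-norms with \emph{different} exponent pairs. Likewise, Nikolski applied block-by-block gives $\|\varphi_k\ast f\|_{L^{p_1}}\lesssim 2^{ks}\|\varphi_k\ast f\|_{L^{p_0}}$, which controls the $\ell^q(L^p)$ (Besov) norm but does not move the $\ell^q$ sum inside the $L^p$ integral. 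What your plan does not address is the $q$-independence of the embedding — the fact that the inclusion holds for \emph{all} $q_0,q_1$, even with $q_0$ large and $q_1$ small — and that is exactly what makes the Triebel--Lizorkin version nontrivial compared to the Besov version.

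The missing idea is an interpolation of the trivial pointwise bound against the uniform Bernstein bound, followed by a geometric-series split. After the lift, take $s=s_0-s_1=d(1/p_0-1/p_1)>0$, set $g(x)=\sup_k 2^{ks}|\varphi_k\ast f(x)|$ and $A=\|g\|_{L^{p_0}}=\|f\|_{F^{p_0,\infty}_s}$. Then $|\varphi_k\ast f(x)|\le 2^{-ks}g(x)$ pointwise, while Bernstein gives $|\varphi_k\ast f(x)|\lesssim 2^{kd/p_0}\|\varphi_k\ast f\|_{L^{p_0}}\le 2^{kd/p_1}A$ uniformly in $x$. Split $\sum_k|\varphi_k\ast f(x)|^{q_1}$ at the crossover index and sum two geometric series to get
\[
\Big(\sum_k|\varphi_k\ast f(x)|^{q_1}\Big)^{1/q_1}\lesssim A^{1-p_0/p_1}\,g(x)^{p_0/p_1},
\]
so that $\|f\|_{F^{p_1,q_1}_0}\lesssim A\le\|f\|_{F^{p_0,q_0}_s}$. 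This is what your sketch is missing; note that it uses the strict inequality $p_0<p_1$ essentially, needs no Fefferman--Stein at all, and in fact proves the stronger statement with $q_0=\infty$. An alternative route is via the Jawerth--Franke embeddings $F^{p_0,q_0}_{s_0}\subset B^{p_1,p_0}_{s_1}$ and $B^{p_0,p_1}_{s_0}\subset F^{p_1,q_1}_{s_1}$.
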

We will frequently exploit that if $s_0-\frac{d}{p_0}=s_1-\frac{d}{p_1}$, then $s_1<s_0$ if and only if $p_0<p_1$.

We need the following instance of the so-called Stein--Weiss interpolation theorem,  e.g. \cite[Corollary~5.5.4]{Bergh-Lofstrom1976:InterpolationSpacesAnIntroduction}.
\begin{theorem}\label{theorem:Stein--Weiss}
	Consider two measure spaces $(X,\mu)$ and $(Y,\nu)$. For $j\in\{0,1\}$, let $v_j\colon X\to[0,\infty]$ and $w_j\colon Y\to[0,\infty]$ be measurable functions, and $1\le p_j,q_j<\infty$. Assume further that 
	\begin{align*}
	T\colon L^{p_0}(X,v_0\,d \mu)+L^{p_1}(X,v_1\,d \mu)\to L^{q_0}(Y,w_0\,d  \nu)+L^{q_1}(Y,w_1\,d  \nu)
	\end{align*}
	is a linear map, and that
	\begin{align*}
	T\colon L^{p_j}(X,v_j\,d \mu)\to L^{q_j}(Y,w_j\,d  \nu)
	\end{align*}
	is bounded for $j\in\{0,1\}$. If
	\begin{align*}
	\frac{1}{p}=\frac{1-\theta}{p_0}+\frac{\theta}{p_1},\qquad 
	\frac{1}{q}=\frac{1-\theta}{q_0}+\frac{\theta}{q_1},
	\end{align*}
	and
	\begin{align*}
	v=v_0^{(1-\theta)\frac{p}{p_0}}v_1^{\theta\frac{p}{p_1}},\qquad
	w=w_0^{(1-\theta)\frac{q}{q_0}}w_1^{\theta\frac{q}{q_1}},
	\end{align*}
	for some $\theta\in (0,1)$,	then
	\begin{align*}
	T\colon L^{p}(X,v\,d \mu)\to L^{q}(Y,w\,d  \nu)
	\end{align*}
	is bounded.
\end{theorem}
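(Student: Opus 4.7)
The plan is to derive Theorem~\ref{theorem:Stein--Weiss} from the classical complex interpolation method (weighted Riesz--Thorin). I would first reduce to a bilinear estimate on simple functions, then construct analytic families parametrised by a strip, and finally invoke Hadamard's three-line lemma.

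\emph{Reduction.} By density of simple functions in each weighted Lebesgue space, together with a standard approximation argument, it suffices to prove $\|Tf\|_{L^q(w\,d\nu)} \le M_0^{1-\theta} M_1^\theta \|f\|_{L^p(v\,d\mu)}$ for a simple function $f$, where $M_j$ denotes the endpoint operator norm of $T$. Using $(L^q(w\,d\nu))^* = L^{q'}(w\,d\nu)$ under the pairing $\langle h_1, h_2 \rangle := \int h_1 h_2\, w\,d\nu$, this is equivalent to bounding $\bigl|\int (Tf)\,\phi\, w\, d\nu\bigr|$ by $M_0^{1-\theta} M_1^\theta$ uniformly over simple $f, \phi$ with $\|f\|_{L^p(v\,d\mu)} = \|\phi\|_{L^{q'}(w\,d\nu)} = 1$.

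\emph{Analytic families and three lines.} Set $1/p(z) := (1-z)/p_0 + z/p_1$ and $1/q'(z) := (1-z)/q_0' + z/q_1'$, and let $F := f v^{1/p}$, $\Phi := \phi w^{1/q'}$, so that $\|F\|_{L^p(d\mu)} = \|\Phi\|_{L^{q'}(d\nu)} = 1$. For $z$ in the closed strip $\overline{S} := \{0 \le \Re z \le 1\}$, define
\[
F_z := |F|^{p/p(z)}\, \frac{F}{|F|}\, v_0^{-(1-z)/p_0} v_1^{-z/p_1}, \qquad G_z := |\Phi|^{q'/q'(z)}\, \frac{\Phi}{|\Phi|}\, w_0^{(1-z)/q_0} w_1^{z/q_1},
\]
and $\Psi(z) := \int (TF_z)\, G_z\, d\nu$. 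The identities $v^{1/p} = v_0^{(1-\theta)/p_0} v_1^{\theta/p_1}$ and $w^{1/q} = w_0^{(1-\theta)/q_0} w_1^{\theta/q_1}$ give $F_\theta = f$ and $G_\theta = \phi\, w$, whence $\Psi(\theta)$ equals the bilinear form to be estimated. Since $F, \Phi$ are simple, $\Psi$ is a finite sum of exponentials in $z$, hence entire and bounded on $\overline{S}$. A direct computation shows $\|F_{iy}\|_{L^{p_0}(v_0\,d\mu)} = 1$ and $\|G_{iy}\, w_0^{-1/q_0}\|_{L^{q_0'}(d\nu)} = 1$, so H\"older's inequality (factoring $1 = w_0^{1/q_0}\, w_0^{-1/q_0}$) combined with the endpoint hypothesis yields $|\Psi(iy)| \le M_0$; the analogous calculation on $\Re z = 1$ gives $|\Psi(1+iy)| \le M_1$. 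Hadamard's three-line lemma then delivers $|\Psi(\theta)| \le M_0^{1-\theta} M_1^\theta$, finishing the argument.

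\emph{Main obstacle.} The only genuine subtlety is the design of the families $F_z, G_z$: the weight factor in $F_z$ must \emph{divide out} $v$ at the diagonal point $z = \theta$ while producing the correct endpoint weights $v_0, v_1$ on the boundary lines, and the weight factor in $G_z$ must simultaneously build in the full $w$ at $z = \theta$ while yielding unit $L^{q_j'}(w_j\,d\nu)$ norms on the boundary. Once these calibrations are written down, the remaining computations are mechanical.
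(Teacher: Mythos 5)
Note first that the paper does not prove Theorem~\ref{theorem:Stein--Weiss}; it is simply cited from Bergh--L\"{o}fstr\"{o}m (their Corollary~5.5.4, which in turn rests on the identification of $[L^{p_0}(v_0),L^{p_1}(v_1)]_\theta$ with $L^p(v)$). Your blind proof instead runs the three-lines argument directly, and the bookkeeping is essentially right: with $F=fv^{1/p}$, $\Phi=\phi w^{1/q'}$, and your choices of $F_z,G_z$, one does get $F_\theta=f$, $G_\theta=\phi w$, $\|F_{j+iy}\|_{L^{p_j}(v_j\,d\mu)}=1$, $\|G_{j+iy}w_j^{-1/q_j}\|_{L^{q_j'}(d\nu)}=1$, and hence $|\Psi(j+iy)|\le M_j$ by H\"{o}lder plus the endpoint bounds. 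So the calibrations --- the point you flag as the main obstacle --- are correct.

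The genuine gap is the sentence ``Since $F,\Phi$ are simple, $\Psi$ is a finite sum of exponentials in $z$, hence entire and bounded.'' That is false as stated: although $F$ is simple, $F_z$ is not, because the factor $v_0^{-(1-z)/p_0}v_1^{-z/p_1}$ depends on $x$. Writing $F=\sum_k a_k\mathbf{1}_{E_k}$ gives
\[
TF_z=\sum_k |a_k|^{p/p(z)}\tfrac{a_k}{|a_k|}\,T\!\left(\mathbf{1}_{E_k}\,v_0^{-(1-z)/p_0}v_1^{-z/p_1}\right),
\]
and the argument of $T$ varies with $z$, so you cannot pull $T$ off a fixed finite family of vectors; the same problem appears in $G_z$ through $w_0^{(1-z)/q_0}w_1^{z/q_1}$. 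You also need $v_j,w_j>0$ a.e.\ for these powers to be finite. The standard repairs are either (i) first reduce, by monotone/dominated convergence and the fact that the constant in the target inequality is weight-independent, to the case where $v_0,v_1,w_0,w_1$ are strictly positive simple functions --- then $F_z,G_z$ really are simple with exponential coefficients and your sentence becomes true --- or (ii) verify directly that $z\mapsto F_z$ is an analytic $\bigl(L^{p_0}(v_0\,d\mu)+L^{p_1}(v_1\,d\mu)\bigr)$-valued map on the open strip, continuous and bounded on the closed strip, and then use boundedness of $T$ plus the pairing with $G_z$ to deduce analyticity and boundedness of $\Psi$. Either repair is routine, but as written the analyticity step does not stand.
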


\section{Proofs of Theorems~\ref{theorem:HardyLittlewoodInequalityNonRadial} through \ref{theorem:LebesgueToBesov}}\label{sec:FourierResults}

Given $x=(x_1,\ldots,x_d)\in\RB^d$, we write $\Pi_x=\prod_{k=1}^d|x_k|$.

\begin{lemma}\label{lemma:ConvolutionInequality}
	Let $p>1$. If $f\in L^1$, then
	\begin{align*}
	\int_{\RB^d}|(f\ast f)(x)|^p \Pi_x{}^{p-2}\,d  x
	\lesssim
	\int_{\RB^d}|f(x)|^{2p} \Pi_x{}^{2p-2}\,d  x.
	\end{align*}
\end{lemma}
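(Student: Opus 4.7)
The plan is to bound $|f*f|$ pointwise by a fractional convolution of $f^2$ with a product power kernel, and then to apply a one-dimensional Stein--Weiss inequality for Riesz potentials together with a coordinate-by-coordinate iteration. Since $|(f*f)(x)| \le (|f|*|f|)(x)$, I would at once reduce to the case $f \ge 0$. Fix a parameter $\alpha \in ((p-1)/p,\,1)$, an interval that is nonempty precisely because $p>1$. Factoring
\[
f(y)f(x-y) \;=\; \bigl[f(y)(\Pi_y/\Pi_{x-y})^{\alpha/2}\bigr]\cdot\bigl[f(x-y)(\Pi_{x-y}/\Pi_y)^{\alpha/2}\bigr]
\]
and applying the Cauchy--Schwarz inequality in $y$, the two resulting factors are interchanged by the substitution $y \mapsto x-y$ and hence coincide, giving the pointwise estimate
\[
(f*f)(x) \;\le\; \int f(y)^2 \Pi_y^{\alpha}\Pi_{x-y}^{-\alpha}\,d y \;=\; (g*\Pi^{-\alpha})(x),\qquad g := f^2\Pi^{\alpha}.
\]

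The lemma is thereby reduced to the weighted convolution estimate
\[
\|g*\Pi^{-\alpha}\|_{L^p(\Pi^{p-2}\,d x)} \;\le\; C \,\|g\|_{L^p(\Pi^{p(2-2/p-\alpha)}\,d x)},
\]
because on substituting $g = f^2\Pi^{\alpha}$ the right-hand weight exponent telescopes to $\alpha p + p(2-2/p-\alpha)=2p-2$, matching the target exactly. I would establish this estimate by invoking the classical one-dimensional Stein--Weiss inequality for the Riesz potential of order $1-\alpha$, with left weight exponent $a=(2-p)/p$ and right weight exponent $b=2-2/p-\alpha$; the scaling identity $a+b+\alpha = 1$ holds automatically, and the side conditions $a<1/p$, $b<1/p'$, $a+b\ge 0$ translate exactly into $p>1$ and $\alpha\in((p-1)/p,\,1)$. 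Since the kernel $\Pi^{-\alpha}=\prod_k|x_k|^{-\alpha}$ and both weights factor across coordinates, the $d$-dimensional inequality follows by iterating the 1D estimate once per coordinate via Fubini.

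The main obstacle is securing the Stein--Weiss hypotheses \emph{uniformly} over the whole range $p>1$: the left exponent $a=(2-p)/p$ changes sign at $p=2$, so one is forced to use the general form of Stein--Weiss that requires only $a+b\ge 0$, rather than the stronger (and more commonly quoted) $a,b\ge 0$ separately. A conceivable alternative is complex interpolation via Theorem~\ref{theorem:Stein--Weiss}, using Plancherel to identify the $p=2$ endpoint with $\int|\hat f|^4\,d \xi$; note however that the naive endpoint $p=1$ is not available, since taking $f=\mathbf 1_{[-\varepsilon,\varepsilon]}$ in dimension one makes the integral $\int(f*f)(x)|x|^{-1}\,d x$ diverge logarithmically, so the inequality genuinely requires $p>1$ and the hypothesis cannot be relaxed.
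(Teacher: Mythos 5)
Your proof is correct, but it takes a genuinely different route from the paper's, so the comparison is worth spelling out. You symmetrize first, applying Cauchy--Schwarz with the splitting $f(y)f(x-y)=[f(y)(\Pi_y/\Pi_{x-y})^{\alpha/2}][f(x-y)(\Pi_{x-y}/\Pi_y)^{\alpha/2}]$ and using the involution $y\mapsto x-y$ to collapse the two factors into a single pointwise bound $(f*f)(x)\le (f^2\Pi^\alpha *\Pi^{-\alpha})(x)$, after which the lemma reduces to a weighted $L^p\to L^p$ estimate for the product Riesz kernel. The paper does roughly the opposite: it applies H\"older with exponents $(p,p')$ to $f(x-y)\Pi_{x-y}^\alpha\,f(y)\Pi_y^\alpha\cdot\Pi_{x-y}^{-\alpha}\Pi_y^{-\alpha}$, computes the resulting $L^{p'}$ integral of $\Pi_{x-y}^{-\alpha p'}\Pi_y^{-\alpha p'}$ explicitly by homogeneity, and only then symmetrizes, using $ab\le(a^2+b^2)/2$ under the double integral; all that is needed there is the one-dimensional beta-type identity $\int|1-t|^{-r}|t|^{-s}\,dt<\infty$ for appropriate $r,s$. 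Accordingly the paper needs $\alpha<\frac{p-1}{p}$ (so that $\alpha p'<1$), while your construction needs $\alpha>\frac{p-1}{p}$ (so that $b<1/p'$) --- the two proofs deliberately sit on opposite sides of the same threshold. What your approach buys is brevity and a clean conceptual reduction to a weighted convolution inequality; what it costs is the reliance on the Stein--Weiss theorem in the $p=q$ regime with a signed exponent, which is not the form most often quoted. As you note, the left-weight exponent $a=(2-p)/p$ is negative for $p>2$, which is outside the hypotheses of the original Stein--Weiss theorem (both exponents nonnegative). The version you need is nonetheless valid: for $p=q$ the substitution $h=|y|^b g$ turns the inequality into the boundedness on $L^p(\RB)$ of the integral operator with kernel $K(x,y)=|x|^{-a}|x-y|^{-\kappa}|y|^{-b}$, which is homogeneous of degree $-1$ by the scaling identity, and the sufficient (and in this setting necessary) condition is $\int_0^\infty K(1,y)y^{-1/p}\,dy<\infty$ --- i.e.\ the Hardy--Littlewood--P\'olya theorem on homogeneous kernels --- and that integrability is exactly $a<1/p$, $b<1/p'$, $a+b>0$. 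If you cite that rather than ``Stein--Weiss'' outright, the proof is airtight; the coordinate-wise Fubini iteration and the $p=1$ counterexample you give are both fine.
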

\begin{proof}
	For a choice of $\alpha\in\RB$, it holds that
	\begin{align}\label{eq:ExponentInequalities}
	\frac{p-1}{2p}<\frac{2p-2}{2p+1}<\alpha<\frac{p-1}{p}<1.
	\end{align}
	Multiplying the corresponding integrand by $1=\Pi_{x-y}{}^\alpha \Pi_{y}{}^\alpha\Pi_{x-y}{}^{-\alpha}\Pi_{y}{}^{-\alpha}$, the convolution
	\begin{align*}
	(f\ast f)(x)
	&=
	\int_{y\in\RB^d}f(x-y)\Pi_{x-y}{}^\alpha f(y)\Pi_{y}{}^\alpha\Pi_{x-y}{}^{-\alpha}\Pi_{y}{}^{-\alpha}\,d  y.
	\end{align*}
	By Hölder's inequality,
	\begin{align*}
	|(f\ast f)(x)|^p
	\le {}&
	\int_{y\in\RB^d}|f(x-y)|^p\Pi_{x-y}{}^{\alpha p}|f(y)|^p\Pi_{y}{}^{\alpha p}\,d  y
	\\
	&\times
	\left(\int_{y\in\RB^d}\Pi_{x-y}{}^{-\alpha p'}\Pi_{y}{}^{-\alpha p'}\,d  y\right)^{p-1}.
	\end{align*}
	By \eqref{eq:ExponentInequalities}, $\frac{1}{2}<\alpha p'<1$, and a change of variables yields
	\begin{align*}
	\int_{y_k\in\RB}\frac{1}{|x_k-y_k|^{\alpha p'}|y_k|^{\alpha p'}}\,d  y_k
	&=
	\frac{1}{|x_k|^{\alpha p'}}\int_{y_k\in\RB}\frac{1}{|1-\frac{y_k}{x_k}|^{\alpha p'}|y_k|^{\alpha p'}}\,d  y_k
	\\
	&=
	|x_k|^{1-2\alpha p'}\int_{y_k\in\RB}\frac{1}{|1-y_k|^{\alpha p'}|y_k|^{\alpha p'}}\,d  y_k.
	\end{align*}
	(In the sense of extended real numbers, the above equalities are valid even for $x_k=0$.) Therefore,
	\begin{align}\label{eq:HomogeneousIntegralIdentity}
	\left(\int_{y\in\RB^d}\Pi_{x-y}{}^{-\alpha p'}\Pi_{y}{}^{-\alpha p'}\,d  y\right)^{p-1}
	=
	c\Pi_{x}{}^{p-1-2\alpha p},
	\end{align}
	for some finite $c>0$, and
	\begin{align*}
	|(f\ast f)(x)|^p\Pi_{x}{}^{p-2}
	\lesssim
	\Pi_{x}{}^{2p-3-2\alpha p}\int_{y\in\RB^d}|f(x-y)|^p\Pi_{x-y}{}^{\alpha p}|f(y)|^p\Pi_{y}{}^{\alpha p}\,d  y.
	\end{align*}
	Integration with respect to $x\in\RB^d$, and another change of variables, yields
	\begin{multline*}
	\int_{x\in\RB^d}|(f\ast f)(x)|^p\Pi_{x}{}^{p-2}\,d  x
	\\
	\lesssim
	\iint_{x,y\in\RB^d}|f(x)|^p\Pi_{x}{}^{\alpha p}|f(y)|^p\Pi_{y}{}^{\alpha p}\Pi_{x+y}{}^{2p-3-2\alpha p}\,d  y\,d  x.
	\end{multline*}
	Multiplying this integrand by $1=\Pi_{x}{}^{\alpha}\Pi_{y}{}^{\alpha}\Pi_{x}{}^{-\alpha}\Pi_{y}{}^{-\alpha}$,
	\begin{multline*}
	\iint_{x,y\in\RB^d}|f(x)|^p\Pi_{x}{}^{\alpha p}|f(y)|^p\Pi_{y}{}^{\alpha p}\Pi_{x+y}{}^{2p-3-2\alpha p}\,d  y\,d  x
	\\
	=
	\iint_{x,y\in\RB^d}|f(x)|^p\Pi_{x}{}^{\alpha p+\alpha}|f(y)|^p\Pi_{y}{}^{\alpha p+\alpha}\Pi_{x+y}{}^{2p-3-2\alpha p}\Pi_{x}{}^{-\alpha}\Pi_{y}{}^{-\alpha}\,d  y\,d  x.
	\end{multline*}
	To the above right-hand side, apply the elementary inequality $ab\le \frac{a^2+b^2}{2}$, with 
	\[
	a=|f(x)|^p\Pi_{x}{}^{\alpha p + \alpha},\quad \textnormal{and}\quad b=|f(y)|^p\Pi_{y}{}^{\alpha p + \alpha},
	\]
	and use that the two resulting integrals are are equal, to obtain that
	\begin{multline*}
	\iint_{x,y\in\RB^d}|f(x)|^p\Pi_{x}{}^{\alpha p}|f(y)|^p\Pi_{y}{}^{\alpha p}\Pi_{x+y}{}^{2p-3-2\alpha p}\,d  y\,d  x
	\\
	\le
	\iint_{x,y\in\RB^d}|f(x)|^{2p}\Pi_{x}{}^{2\alpha p+\alpha }\Pi_{x+y}{}^{2p-3-2\alpha p}\Pi_{y}{}^{-\alpha}\,d  y\,d  x.
	\end{multline*}
	By \eqref{eq:ExponentInequalities}, $\alpha<1$, $2\alpha p+3-2p<1$, and $\alpha+2\alpha p+3-2p>1$. By an argument similar to the one leading up to \eqref{eq:HomogeneousIntegralIdentity},
	\begin{align*}
	\int_{y\in\RB^d}\Pi_{x+y}{}^{2p-3-2\alpha p}\Pi_{y}{}^{-\alpha}\,d  y
	=
	c\Pi_{x}{}^{2p-2-2\alpha p-\alpha},
	\end{align*}
	for some $c\in\RB$. This completes the proof.
\end{proof}

\begin{proof}[Proof of Theorem~\ref{theorem:HardyLittlewoodInequalityNonRadial}]
	The statement is that if $p\ge 2$, and $f\in L^1$, then
	\begin{align*}
	\int_{\RB^d}|\hat f(\xi)|^p\,d  \xi
	\lesssim
	\int_{\RB^d}|f(x)|^{p}\Pi_{x}{}^{p-2}\,d  x.
	\end{align*}
	We will prove the statement for $p=2^N$, $N\in\ZB_{\ge 1}$. The general result follows by Stein--Weiss interpolation, Theorem~\ref{theorem:Stein--Weiss}.
	
	Let $f_0=f$, and $f_{N}=f_{N-1}\ast f_{N-1}$, $N\in\ZB_{\ge 1}$, so that $\hat f_N=\hat f^{2^N}$. By the Plancherel theorem,
	\begin{align*}
	\int_{\RB^d}|\hat f(\xi)|^{2^N}\,d \xi
	=
	\int_{\RB^d}|\hat f_{N-1}(\xi)|^{2}\,d \xi
	=
	\int_{\RB^d}|f_{N-1}(x)|^{2}\,d  x.
	\end{align*}
	Combining Lemma~\ref{lemma:ConvolutionInequality} with an induction argument,
	\begin{align*}
	\int_{\RB^d}|\hat f(\xi)|^{2^N}\,d \xi
	&\lesssim
	\int_{\RB^d}|f_{N-k}(x)|^{2^k}\Pi_{x}{}^{2^k-2}\,d  x,
	\end{align*}
	for $k=1,\ldots,N$. In particular, the desired inequality holds for $p=2^{N}$.
\end{proof}


\begin{proof}[Proof of Theorem~\ref{theorem:LebesgueToHomogneneousSobolev}]
	With $(\varphi_{k})_{k\in\ZB}$ as in the definition of $\dot F^{p,q}_s$, and $f\in L^p$, let $f_N=\sum_{k=-N}^N\hat\varphi_k f$, and $g_N(x)=|x|^{s}f_N(x)$, where $s=d\left(\frac{2}{p}-1\right)$. Since $g_N\in L^1$, Theorem~\ref{theorem:HardyLittlewoodInequalityRadial} implies that
\[
\int_{\RB^d}|\hat g_N(\xi)|^p\,d \xi\lesssim \int_{\RB^d}|f_N(x)|^p\,d  x.
\]
But $\hat g_N=\dot I_s\hat f_N$, so we may use the Littlewood--Paley theorem, and the lifting property of $\dot I_s$, to obtain that
\begin{align*}
\|\hat f_N\|_{\dot F^{p,2}_s}=\left\|\left(\sum_{k=-\infty}^\infty \left|2^{ks}(\varphi_k\ast \hat f_N)\right|^q\right)^{1/q}\right\|_{L^p}\lesssim \|f_N\|_{L^p}\le \|f\|_{L^p}.
\end{align*}
Since $f_N\to f$ in $L^p$, $\hat f_N\to \hat f$ in $\SC'$. Moreover, $\lim_{N\to\infty}(\varphi_k\ast \hat f_N)(x)=(\varphi_k\ast \hat f)(x)$ for every $k$ and $x$. A standard application of Fatou's lemma implies that $\hat f\in \dot W^p_s$.
\end{proof}

\begin{proof}[Proof of Theorem~\ref{theorem:OptimalityOfResults}]
	First, note that $\FC\colon L^p\to \dot F^{r,q}_s$ is bounded if and only if $\FC^{-1}\colon L^p\to \dot F^{r,q}_s$ is bounded.
	
	With $(\varphi_{k})_{k\in\ZB}$ as in the definition of $\dot F^{p,q}_s$,
	\begin{align*}
	\|\varphi_n\|_{\dot F^{r,q}_s}
	=
	\left\|\left(\sum_{k=-\infty}^\infty \left|2^{ks}(\varphi_k\ast \varphi_n)\right|^q\right)^{1/q}\right\|_{L^r}
	\ge
	\left\|2^{ns}(\varphi_n\ast \varphi_n)\right\|_{L^r}.
	\end{align*}
	By some simple changes of variables, $\left(\varphi_n\ast\varphi_n\right)(x)=2^{nd}\left(\varphi_0\ast\varphi_0\right)(2^nx)$, and
	\begin{align*}
	\left\|(\varphi_n\ast \varphi_n)\right\|_{L^r}=2^{nd/r'}\left\|(\varphi_0\ast \varphi_0)\right\|_{L^r}.
	\end{align*}
	If $\FC^{-1}\colon L^p\to \dot F^{r,q}_s$ is bounded, then
	\begin{align*}
	2^{n(s+d/r')}\lesssim \|\varphi_n\|_{\dot F^{r,q}_s} \lesssim \|\hat \varphi_n\|_{L^p}
	=
	2^{nd/p}\|\hat \varphi_0\|_{L^p}.
	\end{align*}
	Such an inequality is only possible if $s=d/p-d/r'$.
	
	In order to obtain a contradiction, assume now that $\FC^{-1}\colon L^p\to \dot F^{r,q}_{d/p-d/r'}$ is bounded, and that $r<p$. By Theorem~\ref{theorem:TriebelLizorkinEmbeddingTheorem}, $\dot F^{r,q}_{d/p-d/r'}\subset \dot F^{\tilde r,\tilde r}_{d/p-d/\tilde r'}$ whenever $r<\tilde r$. It therefore suffices to obtain a contradiction in the case where $q=r<p$. Given a sequence $(\alpha_n)_{n\in\ZB }\in\ell^p(\ZB)$, define $f=\sum_{n\in\ZB }\alpha_n2^{-2nd/p}\varphi_{2n}$. Using that the functions $\left(\hat\varphi_{2n}\right)_{n\in\ZB }$ have pairwise disjoint supports, $\|\hat f\|_{L^p}^p=\left\|(\hat \varphi_0)\right\|_{L^p}^p\sum_{n\in\ZB }|\alpha_n|^p$, and $\left(\varphi_{2k}\ast f\right) = \alpha_k2^{-2kd/p}\left(\varphi_{2k}\ast\varphi_{2k}\right)$. Using the assumption that $\FC^{-1}\colon L^p\to \dot F^{r,r}_{d/p-d/r'}$ is bounded,
	\begin{align*}
	\left(\sum_{n=-\infty}^\infty|\alpha_n|^p\right)^{1/p}
	&\gtrsim
	\left(\int_{x\in \RB^d}\sum_{k=-\infty}^\infty \left|2^{k(d/p-d/r')}(\varphi_k\ast f)(x)\right|^r\,d  x\right)^{1/r}
	\\
	&\ge
	\left(\int_{x\in \RB^d}\sum_{k=-\infty}^\infty \left|2^{2k(d/p-d/r')}(\varphi_{2k}\ast f)(x)\right|^r\,d  x\right)^{1/r}
	\\
	&=
	\left(\int_{x\in \RB^d}\sum_{k=-\infty}^\infty 2^{2kd(1-r)}|\alpha_k|^r\left|(\varphi_{2k}\ast \varphi_{2k})(x)\right|^r\,d  x\right)^{1/r}
	\\
	&=
	\left\|(\varphi_0\ast \varphi_0)\right\|_{L^r}\left(\sum_{k=-\infty}^\infty |\alpha_k|^r\right)^{1/r}.
	\end{align*}
	Hence, we have derived that $\ell^p\subset\ell^r$, which is obviously false for $r<p$.
\end{proof}

\begin{proof}[Proof of Theorem~\ref{theorem:LebesgueToBesov}]
	As in the proof of Theorem~\ref{theorem:OptimalityOfResults}, it is more convenient to show that $\FC^{-1}\colon L^p \to \dot B^{p',p}_0\cap B^{p',p}_0$. Since $\FC (\varphi_k\ast f)=\hat \varphi_k\hat f$, the Hausdorff--Young theorem implies that
	\begin{align*}
	\|f\|_{\dot B^{p',p}_0}^p
	=
	\sum_{k=-\infty}^\infty \|\varphi_{k}\ast f\|_{L^{p'}}^p
	\le
	\sum_{k=-\infty}^\infty \|\hat \varphi_{k}\hat  f\|_{L^{p}}^p
	=
	\int_{\RB^d}|\hat f(\xi)|^p\sum_{k=-\infty}^\infty |\hat \varphi_{k}(\xi)|^p\,d  \xi.
	\end{align*}
	Since $\sum_{k=-\infty}^\infty |\hat \varphi_{k}(\xi)|^p\le 1$ for $\xi\in \RB^d$, $\|f\|_{\dot B^{p',p}_0}\le \|\hat f\|_{L^p}$. To obtain control of the non-homogeneous norm as well, it is enough to note that by Young's inequality
	\begin{align*}
	\|\varphi\ast f\|_{L^{p'}}
	\le 
	\|\varphi\|_{L^{1}}\|f\|_{L^{p'}}
	\le 
	\|\varphi\|_{L^{1}}\|\hat f\|_{L^{p}}.
	\end{align*}
\end{proof}

\section{Proof of Theorems~\ref{theorem:LaplaceCarlesonEmbeddingTheorem} through \ref{theorem:WeightedLaplaceCarlesonEmbedding}}\label{sec:LaplaceResults}

As was mentioned in Section~\ref{sec:PreliminariesAndNotatiion}, the definition of $\dot F^{p,q}_s$ does not depend on the averaging kernel $\varphi$. In fact, the definition of $\dot F^{p,q}_s$ is much more flexible than we indicated. An example of a general result in this direction is \cite[Theorem~3.2]{Liang-Sawano-Ullrich-Yang-Yuan2012:NewCharacterizationsOfBesov--Triebel--Lizorkin--HausdorffSpacesIncludingCoorbitsAndWavelets}. The following special case will help us to relate Bergman spaces to the so-called Sobolev--Slobodeckij spaces $\dot F^{p,p}_s$:
\begin{theorem}\label{theorem:TriebelSpacesByLocalMeans}
	Let $1<p,q<\infty$, $s<0$, $\Phi\in\SC$, $\hat\Phi(0)\ne 0$, and $\Phi_t\colon x\mapsto \frac{1}{t^d}\Phi\left(\frac{x}{t}\right)$. If $f\in\SC'$, and $\|f\|_{\dot F^{p,q}_s}<\infty$, then its canonical representative $f_0$ satisfies
	\[
	\int_{x\in \RB^d}\left[\int_{t=0}^\infty |\Phi_t\ast f_0(x)|^q t^{-sq-1}\,d  t \right]^{p/q}\,d  x<\infty.
	\]
	Conversely, if $f_0$ satisfies the above condition, then $\|f_0\|_{\dot F^{p,q}_s}<\infty$, and $f_0$ is the canonical representative of $[f_0]\in\dot F^{p,q}_s$.
	Moreover, the above expression is comparable to $\|f_0\|_{\dot F^{p,q}_s}^p$.
\end{theorem}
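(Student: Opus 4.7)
The plan is to establish the two-sided equivalence
\[
\|f_0\|_{\dot F^{p,q}_s}^p \approx \int_{\RB^d}\left[\int_0^\infty |\Phi_t \ast f_0(x)|^q t^{-sq-1}\,d  t\right]^{p/q}\,d  x,
\]
comparing the continuous local-means functional on the right with the dyadic Littlewood--Paley functional defining the left-hand side. The main auxiliary object is the Peetre maximal function $\varphi_k^* f(x) := \sup_{y\in\RB^d} |(\varphi_k\ast f)(x-y)|/(1+|2^k y|)^M$, for $M$ sufficiently large depending on $p$ and $q$; together with the vector-valued Fefferman--Stein maximal inequality on $L^p(\ell^q)$, this yields an equivalent norm on $\dot F^{p,q}_s$, and all further comparisons will pass through it.

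For the direction that $\|f_0\|_{\dot F^{p,q}_s}$ dominates the continuous functional, I would expand $f_0=\sum_{k\in\ZB}\varphi_k\ast f_0$ and hence $\Phi_t\ast f_0 = \sum_k \Phi_t\ast\varphi_k\ast f_0$. For $t\in[2^{-j-1},2^{-j})$, the Schwartz decay of $\hat\Phi$ combined with the annular support of $\hat\varphi_k$ in $\{|\xi|\approx 2^k\}$ gives, for every $M'>0$,
\[
|\Phi_t\ast\varphi_k\ast f_0(x)| \lesssim 2^{-|k-j|M'}\,(\varphi_k^* f_0)(x).
\]
Summing in $k$, integrating $t$ over the dyadic interval against the weight $t^{-sq-1}$ (comparable to $2^{j(sq+1)}$ there), and summing in $j$, a discrete $\ell^q$-convolution estimate (valid once $M'>|s|$) controls the inner continuous integral pointwise by $\left(\sum_k (2^{ks}\varphi_k^* f_0(x))^q\right)^{1/q}$. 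Taking the $L^p$-norm in $x$ concludes this direction.

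For the reverse inequality, I would invoke a Calderón-type reproducing formula: using $\hat\Phi(0)\neq 0$, one selects $\Psi\in\SC$ with $\hat\Psi$ supported in an annulus bounded away from the origin such that $\int_0^\infty \hat\Psi(t\xi)\overline{\hat\Phi(t\xi)}\,\frac{d t}{t} = 1$ for $\xi\neq 0$. This yields formally
\[
\varphi_k\ast f_0 = \int_0^\infty (\varphi_k\ast\Psi_t)\ast(\Phi_t\ast f_0)\,\frac{d  t}{t},
\]
and a symmetric pointwise kernel estimate followed by the Fefferman--Stein inequality in $L^p(\ell^q)$ delivers the converse bound. The principal obstacle is the rigorous justification of this reproducing formula together with the identification of the resulting distribution with the canonical representative of $[f_0]\in\SC'/\PC$. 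Since $\hat\Phi(0)\neq 0$, $\Phi_t\ast f_0$ need not decay as $t\to\infty$, so one must exploit the hypothesis $s<0$ both to guarantee convergence of the $t$-integral in $\SC'$ and to eliminate polynomial ambiguity: a nonzero polynomial contribution $P$ would produce $\Phi_t\ast P$ growing polynomially in $t$, which is incompatible with the finiteness of the local-means functional, so the recovered object must coincide with $f_0$ itself.
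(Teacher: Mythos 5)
The paper itself gives no proof of Theorem~\ref{theorem:TriebelSpacesByLocalMeans}; it is quoted as a special case of the local-means characterization in Liang, Sawano, Ullrich, Yang, and Yuan (cited as [Theorem~3.2] in that reference). Your sketch follows the standard route behind such characterizations — Peetre maximal functions, the vector-valued Fefferman--Stein inequality, discretization of the $t$-integral, and a Calderón reproducing formula — and your observation that a polynomial contribution is excluded by the finiteness of the local-means integral is the right way to pin down the canonical representative.

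However, the pointwise kernel estimate at the heart of your first direction is false as stated. You assert that for $t\in[2^{-j-1},2^{-j})$ and every $M'>0$,
\[
|\Phi_t\ast\varphi_k\ast f_0(x)|\lesssim 2^{-|k-j|M'}\,\varphi_k^*f_0(x).
\]
For $k<j$ this fails: on the support of $\hat\varphi_k$ one has $t|\xi|\approx 2^{k-j}\ll 1$, and since $\hat\Phi(0)\ne 0$ and $\hat\Phi$ is continuous, $\hat\Phi(t\xi)$ is bounded away from zero there, so $\Phi_t\ast\varphi_k$ is not small. The correct estimate is one-sided, namely $\lesssim 2^{-\max(k-j,0)M'}\varphi_k^*f_0(x)$: for $k\le j$ the trivial bound $\int|\Phi_t(y)|(1+2^k|y|)^M\,dy\lesssim(1+2^kt)^M\lesssim 1$ is all that is available, while for $k>j$ one inserts $\psi_k$ with $\hat\psi_k\equiv 1$ on $\supp\hat\varphi_k$ and uses the Schwartz decay of $\hat\Phi$ on the dilated annulus. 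This matters: with your symmetric estimate the hypothesis $s<0$ would never enter the first direction, and the argument would ostensibly establish the characterization for $s\ge 0$ as well, which is false for kernels with $\hat\Phi(0)\ne 0$. With the corrected one-sided decay, the resulting discrete convolution kernel is $c_m=2^{ms}$ for $m=j-k\ge 0$ and $c_m=2^{m(s+M')}$ for $m<0$; its $\ell^1$-summability requires precisely $s<0$ and $M'>|s|$, so this is exactly where the hypothesis is used. For the reverse direction you should also spell out the construction of $\Psi$ — one needs $\hat\Psi$ supported in an annulus on which $\hat\Phi$ does not vanish, which is available near the origin by continuity and $\hat\Phi(0)\ne 0$, with an angular normalization if $\hat\Phi$ is not radial — but that part is standard.
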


\begin{proof}[Proof of Theorem~\ref{theorem:LaplaceBergmanEmbedding}]
	The statement is that $\LC \colon L^p(\RB_+)\to A^q_{q/p'-2}(\CB_+)$	is bounded, provided that $2<p\le q<\infty$. Choose $\Phi\in\SC(\RB)$ such that $\hat \Phi(t)=e^{-2\pi t}$ for $t\ge 0$, and let $f\in L^p(\RB_+)$. By Theorem~\ref{theorem:LebesgueToHomogneneousSobolev} and Theorem~\ref{theorem:TriebelLizorkinEmbeddingTheorem}, $\check f\in \dot F^{p,2}_{s_0}\subset\dot F^{q,q}_{s_1}$, where $s_0=\frac{2}{p}-1$ and $s_1=\frac{1}{q}-\frac{1}{p'}$. The dominated convergence theorem yields that $\check f$ is its own canonical representative, so Theorem~\ref{theorem:TriebelSpacesByLocalMeans} implies that 
	\[
	\int_{x\in \RB}\int_{y=0}^\infty |\Phi_y\ast \check f(x)|^q y^{-s_1q-1}\,d  y \,d  x\lesssim \|\check f\|_{\dot F^{q,q}_{s_1}}^p.
	\]
	One easily verifies that $\LC f(x+iy)=\Phi_y\ast \check f(x)$, and since $-s_1q-1=q/p'-2$, $\|\LC f\|_{A^q_{q/p'-2}(\CB_+)}^p\lesssim \|\check f\|_{\dot F^{q,q}_{s_1}}^p$.
\end{proof}

\begin{proof}[Proof of Theorem~\ref{theorem:ZBergmanEmbedding}]
	Given $2<p\le q<\infty$, we will use that $\LC \colon L^p(\RB_+)\to A^q_{q/p'-2}(\CB_+)$ to obtain the inequality
	\begin{equation}\label{eq:ZBergmanEmbedding}
	\left(\int_\DB \left|\sum_{k=0}^\infty a_kw^k\right|^q(1-|w|^2)^{q/p'-2}\,d  A(w)\right)^{1/q}\lesssim \left(\sum_{k=0}^\infty |a_k|^p\right)^{1/p}.
	\end{equation}
	
	We begin with a reduction to the case $p=q$. By Hölder's inequality,
	\[
	\left|\sum_{k=0}^\infty a_kw^k\right|
	\le
	\left(\sum_{k=0}^\infty |a_k|^p\right)^{1/p}\left(\sum_{k=0}^\infty |w|^{kp'}\right)^{1/p'}.
	\]
	Since
	\[
	\sum_{k=0}^\infty |w|^{kp'}\lesssim \frac{1}{1-|w|^{2}},\quad w\in\DB,
	\]
	\begin{align*}
	\left|\sum_{k=0}^\infty a_kw^k\right|^q(1-|w|^2)^{q/p'-2}
	&=
	\left|\sum_{k=0}^\infty a_kw^k\right|^p
	\left|\sum_{k=0}^\infty a_kw^k\right|^{q-p}(1-|w|^2)^{q/p'-2}
	\\
	&\lesssim
	\left|\sum_{k=0}^\infty a_kw^k\right|^p
	\left(\sum_{k=0}^\infty |a_k|^p\right)^{q/p-1}
	(1-|w|^2)^{p-3},
	\end{align*}
	and \eqref{eq:ZBergmanEmbedding} follows from
	\[
	\int_\DB \left|\sum_{k=0}^\infty a_kw^k\right|^p(1-|w|^2)^{p-3}\,d  A(w)\lesssim \sum_{k=0}^\infty |a_k|^p.
	\]
	
	To prove the above inequality, assume without loss of generality that the right-hand side is finite. Since $\sum_{k=0}^\infty a_kw^k=\lim_{N\to \infty}\sum_{k=0}^N a_kw^k$ for $w\in\DB$, Fatou's lemma allows us to only consider sequences with finitely many non-zero elements. Also, by another application of Hölder's inequality,
	\[
	\left|\sum_{k=0}^\infty a_kw^k\right|^p
	\lesssim
	\frac{1}{(1-|w|^{2})^{p-1}}\sum_{k=0}^\infty |a_k|^p.
	\]
	Since the right-hand side is dominated by $\sum_{k=0}^\infty |a_k|^p$ on any compact subset of $\DB$, it is sufficient to prove that
	\[
	\int_{r<|w|<1} \left|\sum_{k=0}^N a_kw^k\right|^p(1-|w|^2)^{p-3}\,d  A(z)\le C\sum_{k=0}^N |a_k|^p,
	\]
	for some $r$ close to $1$, and $C$ independent of $N$. By the substitution $w=e^{2\pi i z}$, $z=x+iy$,
	\begin{multline*}
	\int_{r<|w|<1} \left|\sum_{k=0}^N a_kw^k\right|^p(1-|w|^2)^{p-3}\,d  A(w)
	\\
	=
	4\pi^2\iint_{\substack{|x|<\frac{1}{2}\\ 0<y<\epsilon}} \left|\sum_{k=0}^N a_ke^{2\pi i kz}\right|^p(1-e^{-4\pi y})^{p-3}e^{-4\pi y}\,d  x\,d  y,
	\end{multline*}
	for some small $\epsilon>0$. For $z$ in the above domain of integration, $1-e^{-4\pi y}\approx y$, and $\left|\frac{e^{2\pi i z}-1}{2\pi i z}\right|\approx 1$.	If we let $F(z)=\frac{e^{2\pi i z}-1}{2\pi i z}\sum_{k=0}^N a_ke^{2\pi i kz}$,	then
	\begin{align*}
	\int_{r<|w|<1} \left|\sum_{k=0}^N a_kw^k\right|^p(1-|w|^2)^{p-3}\,d  A(w)
	&\lesssim
	\|F\|_{A^p_{p-3}(\CB_+)}^p.
	\end{align*}
	But $F=\LC f$, where $f=\sum_{k=0}^{N}a_k\1_{(k,k+1)}$. Since $\|f\|_{L^p(\RB_+)}^p=\sum_{k=0}^N|a_k|^p$, the result follows from Theorem~\ref{theorem:LaplaceBergmanEmbedding}.
\end{proof}

\begin{proof}[Proof of Theorem~\ref{theorem:WeightedLaplaceBergmanEmbedding}]
	In general, the map $f\mapsto |\cdot|^{\alpha /p}f$ takes $L^p(\RB_+,x^\alpha\,d  x)$ isometrically onto $L^p(\RB_+)$. Furthermore, we assume that $2<p\le q<\infty$, and $\alpha<p/q'-1$. By Theorem~\ref{theorem:LebesgueToHomogneneousSobolev}, the definition of the Riesz potential, and the lifting property,
	\begin{align*}
	\FC \colon L^p(\RB_+,x^\alpha \,d  x)\to \dot I_{-\alpha/p}\dot W^p_{2/p-1}(\RB)
	=
	\dot W^p_{s_0}(\RB),
	\end{align*}
	where $s_0= \frac{2+\alpha}{p}-1$. By Theorem~\ref{theorem:TriebelLizorkinEmbeddingTheorem}, $\dot W^p_{s_0}(\RB)=\dot F^{p,2}_{s_0}(\RB)\subset \dot F^{q,q}_{s_1}(\RB)$, where $s_1=\frac{1+\alpha}{p}-\frac{1}{q'}$. By our assumption on $\alpha$, $s_1<0$, so, as in the proof of Theorem~\ref{theorem:LaplaceBergmanEmbedding}, we are therefore allowed to apply Theorem~\ref{theorem:TriebelSpacesByLocalMeans} to conclude that
	\begin{align*}
	\LC \colon L^p(\RB_+,x^\alpha \,d  x)\to A^q_{-s_1q-1}(\CB_+)=A^q_{q/p'-2-\alpha q/p}(\CB_+).
	\end{align*}
\end{proof}

\begin{proof}[Proof of Theorem~\ref{theorem:WeightedLaplaceHardyEmbedding}]
	Let $p>2$. By the argument in the previous proof,
	\begin{align*}
	\FC \colon L^p(\RB_+,x^{p-2} \,d  x)\to \dot W^p_{0}(\RB)=L^p(\RB).
	\end{align*}
	The statement follows from the relation between $\FC^{-1}$ and $\LC$.
\end{proof}

\begin{proof}[Proof of Theorem~\ref{theorem:WeightedLaplaceCarlesonEmbedding}]
	Let $\lambda_I$ denote the midpoint of $Q_I$, and $f(t)=e^{-2\pi i \conj{\lambda_I} t}$, $t\ge 0$. Then
	\begin{align*}
	|\LC f(z)|=\frac{1}{2\pi}\frac{1}{\left|z-\conj{\lambda_I}\right|}\gtrsim \frac{1}{|I|},\quad z\in Q_I,
	\end{align*}
	and this bound is independent of the interval $I$. It follows that
	\begin{align*}
	\mu(Q_I)\lesssim |I|^q\int_{\CB_+}|\LC f(z)|^q\,d \mu.
	\end{align*}
	Assuming that $\LC \colon L^p(\RB_+,x^\alpha \,d  x)\to L^q(\CB_+,d \mu)$ is bounded, we obtain that $\mu(Q_I)\lesssim |I|^q\|f\|_{L^p(\RB_+,x^\alpha \,d  x)}^q$. Computing the norm of $f$, it holds that
	\begin{align*}
	\mu(Q_I)\lesssim |I|^{q/p'-\alpha q/p}.
	\end{align*}
	
	Under the assumptions $2<p\le q<\infty$, and $\alpha\le p/q'-1$, this necessary condition is also sufficient for $\LC \colon L^p(\RB_+,x^\alpha \,d  x)\to L^q(\CB_+,d \mu)$. For $\alpha < p/q'-1$, this is implied by Theorem~\ref{theorem:WeightedLaplaceBergmanEmbedding} and  Theorem~\ref{theorem:CarlesonBergman}. For $\alpha=p/q'-1$, we use instead Theorem~\ref{theorem:WeightedLaplaceHardyEmbedding} and Theorem~\ref{theorem:CarlesonDuren}.
\end{proof}

\section{A non-result for case (III)}\label{sec:NonResult}

If $\LC\colon L^{3/2}(\RB_+)\to L^{3/2}(\CB_+,d \mu)$ is bounded, then $\mu$ satisfies 
\begin{align}\label{eq:3/2--3/2}
\mu(Q_I)\lesssim|I|^{1/2}\quad\textnormal{for all intervals}\quad I\subset\RB.
\end{align}
Whether or not the converse holds is an open question, unless $\mu$ is sectorial, in which case the answer is positive.

One might attempt to use Stein--Weiss interpolation, Theorem~\ref{theorem:Stein--Weiss}, to prove that \eqref{eq:3/2--3/2} implies $\LC\colon L^{3/2}(\RB_+)\to L^{3/2}(\CB_+,d \mu)$ also for general measures. In order to do so, it appears necessary to find a measure $M$, and two functions $w_0,w_1\colon \CB_+\to [0,\infty]$, according to the following three conditions:
\begin{align}
\mu(A)=\int_{A} w_0w_1\,dM&\quad\textnormal{for all measurable sets}\quad A\subset\CB_+;\label{eq:3/2--3/2a}\\
\int_{Q_I} w_0^2\,dM\lesssim 1&\quad\textnormal{for all intervals}\quad I\subset\RB;\label{eq:3/2--3/2b}\\
\int_{Q_I} w_1^2\,dM \lesssim|I|&\quad\textnormal{for all intervals}\quad I\subset\RB.\label{eq:3/2--3/2c}
\end{align}
If this could be done, then $\LC\colon L^{1}(\RB_+)\to L^{1}(\CB_+,w_0^2\,d  M)$ and $\LC\colon L^{2}(\RB_+)\to L^{2}(\CB_+,w_1^2\,d  M)$ would both be bounded, and Theorem~\ref{theorem:Stein--Weiss} would imply that $\LC\colon L^{3/2}(\RB_+)\to L^{3/2}(\CB_+,d \mu)$ is also bounded.

The following example shows that the above strategy fails.

\begin{example}
	Consider a sum of unital point masses $\mu = \sum_{n=1}^\infty \delta_{n^2+i}$. Then \begin{align*}
	\mu (Q_I)=
	\#\left\{n\in\ZB_{\ge 1};  n^2\in I \right\} \quad\textnormal{whenever}\quad |I|\ge 1,
	\end{align*}
	and $\mu(Q_I)=0$ otherwise,	so clearly $\mu$ satisfies \eqref{eq:3/2--3/2}.
	
	Assume now that $M$, $w_0$, and $w_1$ satisfy \eqref{eq:3/2--3/2a}--\eqref{eq:3/2--3/2c}. Then $\mu$ is absolutely continuous with respect to $M$, and it is no restriction to assume that $M$ has the same support as $\mu$. Hence, we may assume that $M=\sum_{n=1}^\infty c_n\delta_{n^2+i}$ for some numbers $c_n>0$. For notational convenience, we let $w_{j,n}=w_j(n^2+i)$.
	
	By \eqref{eq:3/2--3/2a}, $w_{0,n}w_{1,n}c_n=1$ for every $n$. In particular, $w_{0,n}^2c_n=\frac{1}{w_{1,n}^2c_n}$. By \eqref{eq:3/2--3/2b}, 
	\begin{align*}
	\sum_{n=1}^\infty w_{0,n}^2c_n = \int_{\CB_+}w_0^2\,d  M<\infty,
	\end{align*}
	so $\lim_{n\to\infty}w_{0,n}^2c_n=0$, and $\lim_{n\to\infty}w_{1,n}^2c_n=\infty$. But by \eqref{eq:3/2--3/2c},
	\begin{align*}
	w_{1,n}^2c_n=\int_{Q_{[n^2,n^2+1]}}w_1^2\,d  M\lesssim 1.
	\end{align*}
	This contradiction shows that $M$, $w_0$, and $w_1$ cannot be chosen according to the conditions \eqref{eq:3/2--3/2a}--\eqref{eq:3/2--3/2c}.
\end{example}

\section*{Acknowledgements}
During the preparation of this paper, I have enjoyed interesting conversations with Jonathan Partington, Maria Carmen Reguera, Amol Sasane, Alexander Strohmaier, Jonathan Bennett, Charles Batty, Sandra Pott, and Alexandru Aleman. I would also like to thank Tino Ullrich, and the authors of \cite{Liang-Sawano-Ullrich-Yang-Yuan2012:NewCharacterizationsOfBesov--Triebel--Lizorkin--HausdorffSpacesIncludingCoorbitsAndWavelets} for their efforts in explaining \cite[Theorem~3.2]{Liang-Sawano-Ullrich-Yang-Yuan2012:NewCharacterizationsOfBesov--Triebel--Lizorkin--HausdorffSpacesIncludingCoorbitsAndWavelets}, and the anonymous referee for carefully reading this manuscript.

\end{document}